\newtheorem{theorem}{Theorem}[section]
\newtheorem{lemma}[theorem]{Lemma}
\newtheorem{proposition}[theorem]{Proposition}
\newtheorem{remark}[theorem]{Remark}
\title[Partial data problems in scalar and vector field tomography]{Partial data problems and unique continuation in scalar and vector field tomography}
\keywords{Inverse problems, X-ray tomography, vector field tomography, region of interest tomography, unique continuation}
\subjclass[2020]{44A12, 46F12, 58A10}
\author{Joonas Ilmavirta}
\thanks{Department of Mathematics and Statistics, University of Jyv\"askyl\"a, P.O. Box 35 (MaD) FI-40014 University of Jyv\"askyl\"a, Finland; \href{mailto:joonas.ilmavirta@jyu.fi}{joonas.ilmavirta@jyu.fi}; ORCID:0000-0002-2399-0911}
\author{Keijo M\"onkk\"onen}
\thanks{Department of Mathematics and Statistics, University of Jyv\"askyl\"a, P.O. Box 35 (MaD) FI-40014 University of Jyv\"askyl\"a, Finland; \href{mailto:keijo.m.t.monkkonen@jyu.fi}{keijo.m.t.monkkonen@jyu.fi}; ORCID:0000-0001-7848-8286}
\date{\today}
\newcommand{\C}{{\mathbb C}}
\newcommand{\R}{{\mathbb R}}
\newcommand{\Z}{{\mathbb Z}}
\newcommand{\N}{{\mathbb N}}
\newcommand{\der}{{\mathrm d}}
\newcommand*{\sol}[1]{#1^\mathrm{s}}
\newcommand{\xrt}{X}
\newcommand{\no}{N}
\newcommand{\dplane}{\mathcal{R}_d}
\newcommand{\nod}{\mathcal{N}_d}
\newcommand{\schwartz}{\mathscr{S}}
\newcommand{\tempered}{\mathscr{S}^{\prime}}
\newcommand{\rapidly}{\mathscr{O}_C^{\prime}}
\newcommand{\slowly}{\mathscr{O}_M}
\newcommand{\fourier}{\mathcal{F}}
\newcommand{\ifourier}{\mathcal{F}^{-1}}
\newcommand{\csmooth}{\mathcal{D}}
\newcommand{\smooth}{\mathcal{E}}
\newcommand{\cdistr}{\mathcal{E}'}
\newcommand{\distr}{\mathcal{D}^{\prime}}
\newcommand{\dimens}{n}
\newcommand{\fraclaplace}{(-\Delta)^s}
\newcommand{\pdos}{\mathcal{P}}
\newcommand{\adm}{\mathcal{A}}
\newcommand{\abs}[1]{\left\lvert #1 \right\rvert}
\newcommand{\aabs}[1]{\left\lVert #1 \right\rVert}
\newcommand{\ip}[2]{\left\langle #1,#2 \right\rangle}
\DeclareMathOperator{\spt}{spt}
\begin{document}

\maketitle

\begin{abstract}
We prove that if~$P(D)$ is some constant coefficient partial differential operator and~$f$ is a scalar field such that~$P(D)f$ vanishes in a given open set, then the integrals of~$f$ over all lines intersecting that open set determine the scalar field uniquely everywhere. This is done by proving a unique continuation property of fractional Laplacians which implies uniqueness for the partial data problem. We also apply our results to partial data problems of vector fields.
\end{abstract}

\section{Introduction}
Let~$f$ be a scalar field and $V\subset\R^n$ a nonempty open set where $n\geq 2$. We study the following partial data problem in X-ray tomography: can we say something about~$f$ if we know the integrals of~$f$ over all lines intersecting~$V$?
Especially, we are interested in the uniqueness problem which can be formulated in terms of the X-ray transform $\xrt_0$ as follows: if $\xrt_0 f=0$ on all lines which intersect~$V$, does it follow that $f=0$?
In general, the answer is ``no''~\cite{NA-mathematics-computerized-tomography} and one has to put some conditions on~$f|_V$.
We prove that if there is some constant coefficient partial differential operator~$P(D)$ such that $P(D)f|_V=0$ and $\xrt_0 f=0$ on all lines intersecting~$V$, then $f=0$.
This generalizes a recent partial data result in~\cite{IM-unique-continuation-riesz-potential}.
As a special case we obtain that if~$f$ is for example polynomial or (poly)harmonic in~$V$, then~$f$ is uniquely determined by its partial X-ray data. 

The partial data result is proved by using a unique continuation property of fractional Laplacian~$\fraclaplace$. We prove that if $s\in (-n/2, \infty)\setminus\Z$ and there is some constant coefficient partial differential operator~$P(D)$ such that $P(D)f|_V=\fraclaplace f|_V=0$, then $f=0$. This generalizes earlier results about unique continuation of fractional Laplacians~\cite{CMR-ucp-higher-order-laplacians, GSU-calderon-problem-fractional-schrodinger}. The unique continuation of~$\fraclaplace$ implies a unique continuation result for the normal operator~$\no_0$ of the X-ray transform~$\xrt_0$, and the uniqueness for the partial data problem follows directly from the unique continuation of~$\no_0$. This approach which uses the unique continuation of the normal operator in proving uniqueness for partial data problems was also used in~\cite{CMR-ucp-higher-order-laplacians, IM-unique-continuation-riesz-potential, IM-one-forms-partial-data}.

We also study partial data problems of vector fields.
Let~$F$ be a vector field and denote by~$\der F$ its exterior derivative or curl which components are $(\der F)_{ij}=\partial_i F_j-\partial_j F_i$.
We prove that if there are some constant coefficient partial differential operators~$P_{ij}(D)$ such that $P_{ij}(D)(\der F)_{ij}|_V=0$ and the integrals of~$F$ over all lines intersecting~$V$ vanish, then~$F$ must be a potential field ($F$ is the gradient of some scalar field).
This is a generalization of a recent result in~\cite{IM-one-forms-partial-data}.
The partial data result is proved by using a relation between the normal operator of the X-ray transform of scalar fields and the normal operator of the X-ray transform of vector fields (see lemma~\ref{lemma:scalarvectornormaloperator}).
This allows one to reduce the partial data problem for the vector field~$F$ to partial data problems for the scalar fields~$(\der F)_{ij}$.
As a special case analogous to the scalar result, we obtain that if~$F$ is for example componentwise polynomial or (poly)harmonic in~$V$, then the solenoidal part of~$F$ is uniquely determined by the partial X-ray data of~$F$.

The partial data problems we study have a relation to the region of interest (ROI) tomography~\cite{CNDK-solving-interior-problem-ct-with-apriori-knowledge, KKW-stability-of-interior-problems, KEQ-wavelet-methods-ROI-tomography, NA-mathematics-computerized-tomography, YYJW-high-order-TV-minimization}.
The main goal in such imaging problems is to determine the attenuation inside a small part of a human body (region of interest) by using only the X-ray data on lines which go through the ROI.
This for example reduces the needed X-ray dose which is given to the patient.
Our results imply that if the attenuation~$f$ satisfies $P(D)f|_V=0$ for some open subset~$V$ of the ROI and some constant coefficient partial differential operator~$P(D)$, then~$f$ is uniquely determined by its partial X-ray data on lines which intersect the ROI.
Note that~$f$ is uniquely determined not only in the ROI but also outside the ROI.
Concrete examples of admissible functions are listed in section~\ref{sec:admissible} below.
In general, $f$ does not have to be smooth and it can have singularities in the ROI.
We also note that our proof for uniqueness does not give stability for the partial data problem.
Especially, outside the ROI we have invisible singularities which cannot be seen by the X-ray data and the reconstruction of such singularities is not stable (see remark~\ref{remark:invisiblesingularities} and~\cite{KLM-on-local-tomography, QU-singularities-x-ray-transform-limited-data, QU-artifacts-and-singularities-limited-tomography}).

Similar ROI tomography problems can be studied in the case of vector fields. In vector field tomography the usual objective is to determine the velocity field of a fluid flow using acoustic travel time or Doppler backscattering measurements~\cite{NO-tomographic-recostruction-of-vector-fields, NO-unique-tomographic-reconstruction-doppler, SCHU-importance-of-vector-field-tomography}.
Assuming that the velocity of the fluid flow is much smaller than the speed of the propagating signal one can linearize the problem.
Linearization then leads to the X-ray transform of the velocity field.
Our results imply that if the velocity field~$F$ satisfies $P_{ij}(D)(\der F)_{ij}|_V=0$ for some open subset~$V$ of the ROI and some constant coefficient partial differential operators~$P_{ij}(D)$,
then the solenoidal part of~$F$ is uniquely determined everywhere by the partial X-ray data of~$F$ on lines intersecting the ROI.
The examples of admissible vector fields are the same as in the scalar case, only interpreted componentwise.
As in the scalar case,~$F$ can have singularities in the ROI, and our proof does not give stability for the partial data problem (since it is based on reduction to the scalar case).

The article is organized as follows. In section~\ref{sec:notation} we introduce our notation, in section~\ref{sec:admissible} we give examples of admissible functions, in section~\ref{sec:mainresults} we give our main theorems and in section~\ref{sec:relatedresults} we discuss some related results. We go through the theory of distributions and the X-ray transform in section~\ref{sec:preliminaries}, and study the space of admissible functions in section~\ref{sec:condition}. Finally, we prove our main results in section~\ref{sec:proofs}.

\subsection{Notation}
\label{sec:notation}
We quickly go through the notation used in our main theorems.
More detailed information about distributions and the X-ray transform of scalar and vector fields can be found in section~\ref{sec:preliminaries}.

We denote by~$f$ a scalar field. The set~$\rapidly(\R^n)$ is the space of rapidly decreasing distributions and the space~$\cdistr(\R^n)\subset\rapidly(\R^n)$ consists of compactly supported distributions.
The subset $C_\infty(\R^n)\subset\rapidly(\R^n)$ is the set of all continuous functions which decay faster than any polynomial at infinity.
We let~$\xrt_0$ be the X-ray transform of scalar fields and it maps a function to its line integrals (see equations~\eqref{eq:xrayscalar1} and~\eqref{eq:xrayscalar2}).
The normal operator is~$\no_0=\xrt_0^*\xrt_0$ where~$\xrt_0^*$ is the adjoint of~$\xrt_0$ (see equations~\eqref{eq:adjointscalar} and~\eqref{eq:normaloperatorscalar}).  

We denote by~$F$ a vector field.
The notation $F\in (\cdistr (\R^n))^n$ means that $F=(F_1, \dotso, F_n)$ where $F_i\in\cdistr(\R^n)$ for all~$i=1, \dotso, n$. The exterior derivative of~$F$ is written in components as $(\der F)_{ij}=\partial_i F_j-\partial_j F_i$.
For scalar fields~$\phi$ the notation $\der\phi$ denotes the gradient of~$\phi$.
We let~$\xrt_1$ be the X-ray transform of vector fields which maps a vector field to its line integrals (see equations~\eqref{eq:xrayvector1} and~\eqref{eq:xrayvector2}).
The normal operator is $\no_1=\xrt_1^*\xrt_1$ where~$\xrt_1^*$ is the adjoint of~$\xrt_1$ (see equations~\eqref{eq:adjointvector} and~\eqref{eq:normaloperatorvectorfield}).

We let $H^r(\R^n)$ be the fractional $L^2$-Sobolev space of order $r\in\R$ and $H^{-\infty}(\R^n)=\bigcup_{r\in\R}H^r(\R^n)$. We define the fractional Laplacian as~$\fraclaplace f=\ifourier(\abs{\cdot}^{2s}\hat{f})$ where $\hat{f}=\fourier(f)$ is the Fourier transform of~$f$ and~$\ifourier$ is the inverse Fourier transform. The fractional Laplacian~$\fraclaplace$ is well-defined in~$\rapidly(\R^n)$ for all $s\in (-n/2, \infty)\setminus \Z$ and in~$H^r(\R^n)$ for all $s\in (-n/4, \infty)\setminus \Z$. 

\subsection{Admissible functions}
\label{sec:admissible}

We denote by~$\pdos$ the set of all polynomials in~$\R^n$ with complex coefficients with the convention that the zero polynomial $P\equiv 0$ does not belong to~$\pdos$. A polynomial $P\in\pdos$ of degree~$m\in\N$ induces a constant coefficient partial differential operator~$P(D)$ of order $m\in\N$ by setting $P(D)=\sum_{|\alpha|\leq m}a_\alpha D^\alpha$ where $a_\alpha\in\C$, $D^\alpha=D_1^{\alpha_1}\cdots D_n^{\alpha_n}$, $D_j=-i\partial_j$ and $\alpha=(\alpha_1, \dotso, \alpha_n)\in\N^n$ is a multi-index such that $|\alpha|=\alpha_1+\dotso+\alpha_n$. The set of admissible functions~$\adm_V$ is defined as
\begin{align}
\label{eq:admissible}
\adm_V=\{f\in H^{-\infty}(\R^n): P(D)f|_V=0 \ \text{for some } P\in\pdos\}
\end{align}
where $V\subset\R^n$ is some nonempty open set. Examples of admissible functions include functions $f\in H^{-\infty}(\R^n)$ such that
\medskip
\begin{itemize}
    \item $f$ is polyharmonic in~$V$, i.e. $(-\Delta)^kf|_V=0$ for some $k\in\N$.
    \medskip
    \item $f$ is polynomial in~$V$.
    \medskip
     \item $f$ is of the form $f(x)=e^{ix\cdot\xi_0}$ in~$V$ where $\xi_0\in\C^n$ is a zero of $P\in\pdos$.\medskip
    \item $f$ is independent of one of the variables $x_1, \dotso , x_n$ in~$V$.
    \medskip
    \item $f$ satisfies the wave equation $(\partial_t^2-\Delta)f=0$ in $V$.
\end{itemize}

For convex sets~$V$ and a fixed $P\in\pdos$ the linear span of solutions of the form~$q(x)e^{ix\cdot\zeta}$ is dense in the space of all smooth solutions of $P(D)g=0$ in~$V$ (see~\cite[Theorem 7.3.6]{HO:analysis-of-pdos} and a more general result~\cite[Theorem 10.5.1]{HO:analysis-of-pdos2}).

If~$P(D)$ is a hypoelliptic operator, then the condition $P(D)f|_V=0$ already implies that~$f$ is smooth in~$V$ (see~\cite{HO:analysis-of-pdos2, MI:distribution-theory2}).
Basic examples of hypoelliptic operators are elliptic operators such as integer powers of Laplacians ($(-\Delta)^k$ where $k\in\N$) and also the non-elliptic heat operator $\partial_t-\Delta$.
However, there are non-smooth distributions~$f|_V$ which satisfy the condition $P(D)f|_V=0$ for some $P\in\pdos$ and therefore~$f$ can have singularities in~$V$. For example, the wave operator $\partial_t^2-\Delta$ is not hypoelliptic and has non-smooth weak solutions.
Another example of a non-hypoelliptic operator is the partial derivative~$\partial_{x_i}$: if~$f|_V$ is independent of~$x_i$, then the behaviour with respect to the other variables can be singular.

\subsection{Main results}
\label{sec:mainresults}
In this section we give our main theorems. The proofs of the results can be found in section~\ref{sec:proofs}. 

Our main theorem is the following unique continuation result for the fractional Laplacian.

\begin{theorem}
\label{thm:ucppolynomial}
Let $n\geq 1$, $s\in(-n/4, \infty)\setminus\Z$ and $f\in\adm_V$ where $V\subset\R^n$ is some nonempty open set. If $\fraclaplace f|_V=0$, then $f=0$. If $f\in\rapidly(\R^n)\cap\adm_V$, then the claim holds for $s\in(-n/2, \infty)\setminus\Z$.
\end{theorem}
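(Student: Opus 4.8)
The plan is to deduce the new statement from the known unique continuation results for $\fraclaplace$ (the case $P(D) = \mathrm{Id}$, i.e.\ $f|_V = 0$) by peeling off the constraint $P(D)f|_V = 0$ one derivative at a time. Suppose $f \in \adm_V$ with $\fraclaplace f|_V = 0$ and fix $P \in \pdos$ with $P(D) f|_V = 0$, say of degree $m$. I would argue by induction on $m$. The base case $m = 0$ forces $f|_V = 0$ (after dividing by the nonzero constant $P$), and then the classical unique continuation property of the fractional Laplacian (the result of~\cite{GSU-calderon-problem-fractional-schrodinger} for $s \in (0,1)$, extended to all $s \in (-n/2,\infty)\setminus\Z$ as in~\cite{CMR-ucp-higher-order-laplacians}, which is exactly the $P \equiv \text{const}$ instance already available) gives $f = 0$.

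For the inductive step the key observation is that differentiation essentially commutes with the hypotheses. Pick any direction $e_j$ such that $\partial_j P \not\equiv 0$ (possible since $\deg P = m \geq 1$), and consider $g = \partial_j f$. On the one hand $\fraclaplace g|_V = \partial_j (\fraclaplace f)|_V = 0$ because $\fraclaplace$ commutes with $\partial_j$ on $\rapidly(\R^n)$ (both are Fourier multipliers) and because $\fraclaplace f$ vanishes on the \emph{open} set $V$, so all its derivatives vanish there too. On the other hand, $(\partial_j P)(D)\, g|_V$ is, up to the harmless factor $-i$ from $D_j = -i\partial_j$, equal to $\partial_j\big(P(D) f\big)|_V = 0$, again because $P(D)f$ vanishes on the open set $V$. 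Hence $g = \partial_j f \in \adm_V$ with witness polynomial $\partial_j P$ of degree $m-1$, and $\fraclaplace g|_V = 0$; by the induction hypothesis $g = 0$, i.e.\ $\partial_j f = 0$ as a distribution on all of $\R^n$. Doing this for every $j$ with $\partial_j P \not\equiv 0$ and iterating shows that $D^\alpha f = 0$ for every multi-index $\alpha$ appearing in the top-degree part of $P$; more carefully, a clean induction gives that $Q(D) f = 0$ on $\R^n$ for some $Q \in \pdos$ with $\deg Q < \deg P$, and then one repeats. In fact the simplest bookkeeping is: after finitely many steps one reaches a $g$ which is a (now global) derivative of $f$ satisfying $g|_V = 0$ and $\fraclaplace g|_V = 0$, whence $g = 0$; walking back up, $f$ satisfies $R(D) f = 0$ globally with $R$ a nonzero constant times a product of first-order factors, i.e.\ $f$ is globally annihilated by a nontrivial constant-coefficient operator while lying in $\rapidly(\R^n)$ (or $H^{-\infty}$), which by taking Fourier transforms forces $\hat f$ to be supported on the zero set of a nonzero polynomial — a measure-zero set — so $\hat f = 0$ and $f = 0$.

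The range of $s$ is handled exactly as in the base case: for $f \in \rapidly(\R^n)$ the fractional Laplacian is defined for all $s \in (-n/2,\infty)\setminus\Z$ and the known unique continuation statement holds in that full range, while for $f \in H^{-\infty}(\R^n)$ one needs $s \in (-n/4,\infty)\setminus\Z$ so that $\fraclaplace f$ makes sense in the Sobolev scale; note that passing to derivatives $\partial_j f$ stays inside the same function class ($\rapidly$ is closed under $\partial_j$, and $H^r \to H^{r-1}$ keeps us in $H^{-\infty}$), so the inductive argument does not degrade the admissible range of $s$.

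The main obstacle — and the only genuinely delicate point — is justifying that $\fraclaplace$ commutes with $\partial_j$ and that ``$u = 0$ on the open set $V$'' passes to ``$\partial_j u = 0$ on $V$'' at the level of the relevant distribution spaces, together with making sure the witness polynomial really drops in degree (it does: $\deg \partial_j P = \deg P - 1$ whenever $\partial_j P \not\equiv 0$, and some such $j$ exists when $\deg P \geq 1$). The commutation $\partial_j \fraclaplace = \fraclaplace \partial_j$ is immediate on $\rapidly(\R^n)$ from the Fourier-multiplier definition $\fraclaplace f = \ifourier(\abs{\cdot}^{2s}\hat f)$, and for $f \in H^{-\infty}$ one checks it on the Sobolev scale the same way; the restriction-to-$V$ step is just the elementary fact that differentiating a distribution that vanishes on an open set yields a distribution vanishing on that same open set. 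Once these routine points are in place the induction closes with no further input.
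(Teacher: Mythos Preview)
Your inductive step contains a genuine error. You claim that $(\partial_j P)(D)\,g|_V$ equals (up to a constant) $\partial_j\big(P(D)f\big)|_V$ when $g=\partial_j f$, but on the Fourier side the left-hand operator has symbol $(\partial_{\xi_j}P)(\xi)\cdot i\xi_j$ while the right-hand one has symbol $i\xi_j\cdot P(\xi)$, and these coincide only for very special $P$ (try $P(\xi)=\xi_1^2+1$). What \emph{is} true is $P(D)(\partial_j f)|_V=\partial_j\big(P(D)f\big)|_V=0$, so $g=\partial_j f$ still lies in $\adm_V$ --- but with the \emph{same} witness $P$ of the same degree $m$. There is no degree drop, and the induction never moves. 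The subsequent ``walking back up'' paragraph presupposes a global relation $R(D)f=0$ that has not been established.

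The fix --- and this is exactly what the paper does --- is to apply the known unique continuation result once, to $u=P(D)f$ rather than to $f$, with no induction at all. Since $P(D)$ is local and commutes with $\fraclaplace$, the hypothesis $\fraclaplace f|_V=0$ gives $\fraclaplace(P(D)f)|_V=P(D)(\fraclaplace f)|_V=0$; combined with $P(D)f|_V=0$ this is precisely the input for lemma~\ref{lemma:uniquecontinuationoffractionallaplacian} applied to $u=P(D)f$ (which stays in $H^{-\infty}$, respectively $\rapidly$, so the stated ranges of $s$ are preserved), yielding $P(D)f=0$ on all of $\R^n$. Your final Fourier-side argument (the zero set of a nonzero polynomial has Lebesgue measure zero, hence $\hat f=0$) is then correct and finishes the proof.
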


Theorem~\ref{thm:ucppolynomial} generalizes the result in~\cite{CMR-ucp-higher-order-laplacians} (see lemma~\ref{lemma:uniquecontinuationoffractionallaplacian}) where one assumes that $\fraclaplace f|_V=f|_V=0$. In fact, theorem~\ref{thm:ucppolynomial} is proved by reducing the claim to the case treated in~\cite[Theorem 1.1]{CMR-ucp-higher-order-laplacians} (see section~\ref{sec:proofs}). The meaning of the condition $f\in\adm_V$ is discussed in section~\ref{sec:condition} (see remark~\ref{remark:condition}). When $s\in(-n/2, -n/4]\setminus\Z$, we need to have $f\in\rapidly(\R^n)$ so that~$\fraclaplace f$ is well-defined and we can use lemma~\ref{lemma:uniquecontinuationoffractionallaplacian} in the proof of theorem~\ref{thm:ucppolynomial}.

For compactly supported distributions we get a slightly stronger result.

\begin{theorem}
\label{thm:strongerucp}
Let $n\geq 2$, $s\in(-n/2, \infty)\setminus\Z$ and $f\in\cdistr(\R^n)\cap\adm_V$ where $V\subset\R^n$ is some nonempty open set. If $\partial^\beta(\fraclaplace f)(x_0)=0$ for some $x_0\in V$ and all $\beta\in\N^n$, then $f=0$.
\end{theorem}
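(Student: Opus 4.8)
The plan is to deduce this from Theorem~\ref{thm:ucppolynomial} by upgrading the pointwise vanishing of all derivatives of $\fraclaplace f$ at a single point $x_0 \in V$ to the vanishing of $\fraclaplace f$ on a whole neighborhood of $x_0$, and then invoking the already-established unique continuation result. The key observation is that when $f \in \cdistr(\R^n)$ is compactly supported, its Fourier transform $\hat f$ is real-analytic (indeed entire of exponential type, by Paley--Wiener), and hence $\abs{\cdot}^{2s}\hat f$ is real-analytic away from the origin. For $s \in (-n/2,\infty)\setminus\Z$ the function $\abs{\xi}^{2s}\hat f(\xi)$ is locally integrable near the origin, so $\fraclaplace f = \ifourier(\abs{\cdot}^{2s}\hat f)$ is a well-defined tempered distribution; moreover, since $\abs{\cdot}^{2s}\hat f$ has at most polynomial-type growth issues only from the $\abs{\xi}^{2s}$ factor near $0$ (and $\hat f$ is bounded there) while decaying... wait, $\hat f$ need not decay, but $\abs{\cdot}^{2s}\hat f \in \tempered(\R^n)$ regardless. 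I would argue that $\fraclaplace f$ is in fact real-analytic on $\R^n \setminus \{\,0\,\}$... no — rather, I claim $\fraclaplace f$ is a real-analytic function on all of $\R^n$ minus possibly nowhere, because the singularity of the symbol $\abs{\xi}^{2s}$ is confined to $\xi = 0$, and convolving/multiplying in Fourier space: the point is that the \emph{only} obstruction to $\fraclaplace f$ being real-analytic everywhere comes from the behavior of $\abs{\xi}^{2s}\hat f(\xi)$ at infinity, which is controlled by $\hat f$ being entire of exponential type, so $\fraclaplace f$ is real-analytic on all of $\R^n$ (its wavefront set is empty: away from $\xi=0$ the symbol is smooth, and the exponential-type bound on $\hat f$ kills analytic singularities). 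Therefore $\fraclaplace f$ is real-analytic in a neighborhood of $x_0$.

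Granting real-analyticity of $\fraclaplace f$ near $x_0$, the hypothesis $\partial^\beta(\fraclaplace f)(x_0) = 0$ for all $\beta \in \N^n$ forces the Taylor series of $\fraclaplace f$ at $x_0$ to vanish identically, hence $\fraclaplace f \equiv 0$ on a neighborhood $V' \subset V$ of $x_0$. Since $f$ is compactly supported it lies in $\rapidly(\R^n)$, and $f \in \adm_V$ implies $f \in \adm_{V'}$ (the same $P \in \pdos$ works on the smaller open set $V'$). Now Theorem~\ref{thm:ucppolynomial} applies with the open set $V'$ and any admissible $s \in (-n/2,\infty)\setminus\Z$ (using the $\rapidly$ clause), yielding $f = 0$.

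The main obstacle is the first step: carefully justifying that $\fraclaplace f$ is real-analytic near $x_0$ when $f \in \cdistr(\R^n)$. One clean way is via the Paley--Wiener--Schwartz theorem: $\hat f$ extends to an entire function on $\C^n$ with $\abs{\hat f(\zeta)} \le C(1+\abs{\zeta})^N e^{R\abs{\im \zeta}}$. One then needs that multiplying by the homogeneous symbol $\abs{\xi}^{2s}$ — which is smooth on $\R^n\setminus\{0\}$ but only continuous (for $s>0$) or mildly singular (for $-n/2 < s < 0$) at the origin — still produces a distribution whose inverse Fourier transform is real-analytic. The homogeneous distribution $\abs{\xi}^{2s}$ has inverse Fourier transform a constant multiple of $\abs{x}^{-2s-n}$ (a Riesz-type kernel, suitably interpreted when $-2s-n$ is a nonnegative even integer, which is excluded here since $s \notin \Z$ would still allow... one checks $-2s-n \in 2\N_0$ iff $s = -(n+2k)/2$, consistent with $s \notin \Z$ only when $n$ is odd, so a mild case check is needed), and then $\fraclaplace f = c_{n,s}\,\abs{\cdot}^{-2s-n} * f$. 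Since $f$ has compact support and $\abs{x}^{-2s-n}$ is real-analytic on $\R^n \setminus \{0\}$, the convolution is real-analytic on $\R^n \setminus \spt f$ automatically; near $x_0$, if $x_0 \notin \spt f$ we are done, and if $x_0 \in \spt f$ one argues via the global real-analyticity coming from the entire extension of $\hat f$. I would organize the proof so that the reduction "$\fraclaplace f$ real-analytic near $x_0$ $\Rightarrow$ $\fraclaplace f = 0$ near $x_0$ $\Rightarrow$ apply Theorem~\ref{thm:ucppolynomial}" is the visible skeleton, with the analyticity fact quoted from Paley--Wiener--Schwartz and the structure of Riesz kernels, referencing the preliminaries in section~\ref{sec:preliminaries}.
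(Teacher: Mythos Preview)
Your skeleton --- upgrade ``all derivatives vanish at $x_0$'' to ``vanishes on a neighborhood'' via real-analyticity, then invoke Theorem~\ref{thm:ucppolynomial} --- is the right instinct, but the analyticity claim for $\fraclaplace f$ itself is false when $x_0\in\spt f$, and this is exactly the case you cannot exclude. Take $f=\delta_0\in\cdistr(\R^n)$: then $\hat f\equiv 1$ is as entire and Paley--Wiener as one could wish, yet $\fraclaplace f=c_{n,s}\abs{x}^{-2s-n}$ is not real-analytic (indeed not even a function) at the origin. More generally, for any $f\in\cdistr(\R^n)$ the fractional Laplacian is an \emph{elliptic} pseudodifferential operator of order~$2s$, so the analytic wave front sets satisfy $WF_A(\fraclaplace f)=WF_A(f)$; the Paley--Wiener bound $\abs{\hat f(\zeta)}\le C(1+\abs{\zeta})^N e^{R\abs{\im\zeta}}$ gives no decay on the real axis and hence cannot force $WF_A(f)=\emptyset$. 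Your convolution argument with the Riesz kernel $\abs{x}^{-2s-n}$ does give real-analyticity of $\fraclaplace f$ on $\R^n\setminus\spt f$, but nothing more.

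The paper repairs this with a one-line reordering that you have almost written down: first apply $P(D)$. Since $P(D)f|_V=0$, the point $x_0\in V$ lies \emph{outside} $\spt(P(D)f)$, and now your analyticity argument is valid for $g\coloneqq P(D)f\in\cdistr(\R^n)$: the Riesz-kernel convolution shows $\fraclaplace g$ is real-analytic near $x_0$. Commuting $P(D)$ with $\partial^\beta$ and with $\fraclaplace$ turns the hypothesis into $\partial^\beta(\fraclaplace g)(x_0)=0$ for all $\beta$, whence $\fraclaplace g$ vanishes on a neighborhood of $x_0$; together with $g|_V=0$ this gives $g=0$ by Lemma~\ref{lemma:uniquecontinuationoffractionallaplacian}, and then the Fourier argument from the proof of Theorem~\ref{thm:ucppolynomial} yields $f=0$. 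In the paper this analyticity step for compactly supported $g$ with $x_0\notin\spt g$ is quoted as a black box (Corollary~4 of~\cite{CMR-ucp-higher-order-laplacians}), but the content is exactly what you outlined --- only applied to $P(D)f$ rather than to $f$.
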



From the unique continuation of fractional Laplacians we immediately obtain the following unique continuation result for the normal operator of the X-ray transform of scalar fields. The reason is that the normal operator can be written as $\no_0=(-\Delta)^{-1/2}$ up to a constant factor (see section~\ref{sec:xrayscalar}).

\begin{theorem}
\label{thm:ucpnormaloperator}
Let $n\geq 2$ and $f\in\cdistr(\R^n)\cap\adm_V$ or $f\in C_\infty(\R^n)\cap\adm_V$ where $V\subset\R^n$ is some nonempty open set. If $\no_0 f|_V=0$, then $f=0$.
\end{theorem}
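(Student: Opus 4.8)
The plan is to deduce Theorem~\ref{thm:ucpnormaloperator} directly from Theorem~\ref{thm:ucppolynomial} by exploiting the identification of the normal operator with a fractional Laplacian. Recall from section~\ref{sec:xrayscalar} that $\no_0 = c_n (-\Delta)^{-1/2}$ for an explicit positive constant $c_n$, so $\no_0 f|_V = 0$ is equivalent to $(-\Delta)^{-1/2} f|_V = 0$, i.e.\ $\fraclaplace f|_V = 0$ with $s = -1/2$. The first thing I would check is that this value of $s$ is admissible for the relevant part of Theorem~\ref{thm:ucppolynomial}: we have $-1/2 \notin \Z$, and for $n \geq 2$ the condition $s \in (-n/4, \infty)$ reads $-1/2 > -n/4$, which holds precisely when $n > 2$, while for $n = 2$ we have $-1/2 \in (-n/2, -n/4] = (-1, -1/2]$, the boundary case, so there one must instead invoke the second sentence of Theorem~\ref{thm:ucppolynomial}, which requires $f \in \rapidly(\R^n)$.

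So the argument splits according to which class $f$ belongs to. If $f \in \cdistr(\R^n)$, then $f$ is compactly supported, hence $f \in \rapidly(\R^n)$, so $\fraclaplace f$ with $s = -1/2$ is well-defined and Theorem~\ref{thm:ucppolynomial} in its $\rapidly(\R^n)$ form (valid for $s \in (-n/2,\infty)\setminus\Z$, which includes $s = -1/2$ for all $n \geq 2$) applies and gives $f = 0$. If instead $f \in C_\infty(\R^n)$, I would note that $C_\infty(\R^n) \subset \rapidly(\R^n)$ by the description in section~\ref{sec:notation}, so again $\fraclaplace f$ is well-defined for $s = -1/2$ and $n \geq 2$, and the $\rapidly(\R^n)$ form of Theorem~\ref{thm:ucppolynomial} again forces $f = 0$. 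In both cases the hypothesis $f \in \adm_V$ carries over unchanged, since it is part of the assumption.

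The only genuinely technical point — and I expect it to be the main obstacle — is making sure the identity $\no_0 = c_n(-\Delta)^{-1/2}$ is being applied in the right distributional class, i.e.\ that $\no_0 f$ and $(-\Delta)^{-1/2} f$ genuinely agree as distributions (or at least as distributions restricted to $V$) for the $f$ under consideration, so that the vanishing of one on $V$ really does transfer to the other. This is exactly the well-definedness issue already flagged in the discussion after Theorem~\ref{thm:ucppolynomial}, and it is the reason the hypotheses restrict $f$ to $\cdistr(\R^n)$ or $C_\infty(\R^n)$ rather than an arbitrary element of $H^{-\infty}(\R^n)$: on these classes the relevant fractional integral $(-\Delta)^{-1/2}$ (equivalently the Riesz potential $I_1$) converges and the normal operator is represented by it. Once that identification is in place for the allowed classes, the theorem is immediate from Theorem~\ref{thm:ucppolynomial}, and no further work (in particular no new unique continuation input) is needed.
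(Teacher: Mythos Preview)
Your proposal is correct and follows essentially the same route as the paper: both arguments observe that $\cdistr(\R^n)\subset\rapidly(\R^n)$ and $C_\infty(\R^n)\subset\rapidly(\R^n)$, identify $\no_0$ with $(-\Delta)^{-1/2}$ up to a constant, note that $s=-1/2\in(-n/2,\infty)\setminus\Z$ for $n\geq 2$, and then invoke the $\rapidly$-version of Theorem~\ref{thm:ucppolynomial}. Your case split on $n=2$ versus $n>2$ is unnecessary once you pass to the $\rapidly$ statement, but it does no harm.
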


Theorem~\ref{thm:ucpnormaloperator} is a generalization of the result in~\cite{IM-unique-continuation-riesz-potential} where one assumes $\no_0 f|_V=f|_V=0$. When $f\in\cdistr(\R^n)\cap\adm_V$, we could replace the assumption $\no_0 f|_V=0$ with the requirement that~$\no_0 f$ vanishes to infinite order at some point $x_0\in V$ (see theorem~\ref{thm:strongerucp}). In order to use theorem~\ref{thm:ucppolynomial} in the case $s=-1/2$ and $n\geq 2$, and to guarantee that $\no_0 f$ is well-defined, we need to have $f\in\cdistr(\R^n)\subset\rapidly(\R^n)$ or $f\in C_\infty(\R^n)\subset\rapidly(\R^n)$ in theorem~\ref{thm:ucpnormaloperator}. 

The unique continuation of~$\no_0$ implies uniqueness for the following partial data problem.

\begin{theorem}
\label{thm:xrayscalarpolynomial}
Let $n\geq 2$ and $f\in\cdistr(\R^n)\cap\adm_V$ or $f\in C_\infty(\R^n)\cap\adm_V$ where $V\subset\R^n$ is some nonempty open set. If $\xrt_0 f=0$ on all lines intersecting~$V$, then $f=0$.
\end{theorem}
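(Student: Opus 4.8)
The plan is to deduce Theorem~\ref{thm:xrayscalarpolynomial} from Theorem~\ref{thm:ucpnormaloperator} by showing that the vanishing of $\xrt_0 f$ on all lines meeting $V$ forces $\no_0 f|_V = 0$. The key observation is that the normal operator factors as $\no_0 = \xrt_0^* \xrt_0$, so $\no_0 f$ is obtained by first taking the line integrals of $f$ and then applying the back-projection $\xrt_0^*$. Since back-projection at a point $x$ is an average (integral) over all lines through $x$ of the data $\xrt_0 f$, and every line through a point $x \in V$ is a line that intersects $V$, the hypothesis $\xrt_0 f = 0$ on all such lines yields $(\no_0 f)(x) = (\xrt_0^* \xrt_0 f)(x) = 0$ for all $x \in V$, i.e. $\no_0 f|_V = 0$.

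The steps, in order, are as follows. First I would recall from section~\ref{sec:xrayscalar} the explicit formulas \eqref{eq:adjointscalar} and \eqref{eq:normaloperatorscalar} for $\xrt_0^*$ and $\no_0 = \xrt_0^* \xrt_0$, noting that $\xrt_0^* g(x)$ depends only on the values of $g$ on lines passing through $x$. Second, I would verify the set-theoretic triviality that for $x \in V$ every line through $x$ is a line intersecting $V$ (indeed it intersects $V$ at $x$ itself), so the hypothesis applies to all lines entering the back-projection integral at any point of $V$. Third, I would conclude $\no_0 f|_V = 0$. Care must be taken with the distributional meaning of these pointwise statements when $f \in \cdistr(\R^n)$ is merely a compactly supported distribution; here one uses that $\no_0 f = c\,(-\Delta)^{-1/2} f$ is actually a smooth function away from $\spt f$ and, more to the point, that the restriction of $\no_0 f$ to the open set $V$ is well-defined as a distribution on $V$ and is computed by testing against $\csmooth(V)$, where the supporting lines argument still applies via Fubini/duality. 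Finally, having established $\no_0 f|_V = 0$ and given that $f \in \cdistr(\R^n) \cap \adm_V$ or $f \in C_\infty(\R^n) \cap \adm_V$ by hypothesis, Theorem~\ref{thm:ucpnormaloperator} immediately gives $f = 0$.

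The main obstacle, and the only place where real care is needed, is the rigorous justification of the identity ``$\xrt_0 f = 0$ on all lines through $V$ implies $\no_0 f|_V = 0$'' at the level of distributions rather than pointwise-defined functions. One must make precise in what sense $\xrt_0 f$ is ``zero on a line'' for non-smooth $f$, ensure that the composition $\xrt_0^* \xrt_0$ is legitimate on the relevant distribution class (this is exactly why the theorem restricts to $\cdistr(\R^n)$ or $C_\infty(\R^n)$, where $\xrt_0 f$ has enough regularity/decay for $\xrt_0^*$ to act), and then interchange the order of integration. This is a routine but slightly delicate measure-theoretic/duality argument that should already be available from the preliminaries in section~\ref{sec:preliminaries}; everything after it is immediate from Theorem~\ref{thm:ucpnormaloperator}.
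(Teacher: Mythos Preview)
Your proposal is correct and follows essentially the same approach as the paper: the paper's proof simply states that $\xrt_0 f=0$ on all lines intersecting $V$ implies $\no_0 f|_V=0$, and then invokes Theorem~\ref{thm:ucpnormaloperator}. Your write-up is more detailed about the back-projection justification than the paper itself, but the strategy is identical.
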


Theorem~\ref{thm:xrayscalarpolynomial} generalizes theorem 1.2 in~\cite{IM-unique-continuation-riesz-potential}, where one assumes $f|_V=0$, to the case $P(D)f|_V=0$ for some $P\in\pdos$.
The case where~$f$ is polynomial in~$V$ is previously known in two dimensions~\cite{KKW-stability-of-interior-problems, YYJW-high-order-TV-minimization}.


It is important to notice that from the vector space structure of admissible functions~$\adm_V$ it follows that theorem~\ref{thm:xrayscalarpolynomial} is indeed a uniqueness result: if~$f_1$ and~$f_2$ satisfy $P_1(D)f_1|_V=P_2(D)f_2|_V=0$ for some $P_1, P_2\in\pdos$ and $\xrt_0 f_1=\xrt_0 f_2$ on all lines intersecting~$V$, then $f_1=f_2$ (see proposition~\ref{prop:vectorspace} and remark~\ref{remark:uniquenesspartialdata} for more details). 
Especially, the equality of the X-ray data on all lines intersecting~$V$ implies that the scalar fields are equal everywhere even though~$f_1$ and~$f_2$ a priori can have very different behaviour in~$V$ since~$P_1(D)$ can be different from~$P_2(D)$. 

\begin{remark}
\label{remark:invisiblesingularities}
Our proof for theorem~\ref{thm:xrayscalarpolynomial} gives only uniqueness but not stability for the partial data problem. In theorem~\ref{thm:xrayscalarpolynomial} we eventually have to assume that~$f$ is not supported in~$V$ since otherwise we would have $P(D)f=0$ everywhere and therefore~$f=0$ without assuming anything about the X-ray data (see the proof of theorem~\ref{thm:ucppolynomial}). When~$f$ is supported outside~$V$ we do not have access to all singularities of~$f$ via the X-ray data, i.e. we have invisible singularities outside~$V$. It is known that the recovery of such invisible singularities is not stable~\cite{KLM-on-local-tomography, QU-singularities-x-ray-transform-limited-data, QU-artifacts-and-singularities-limited-tomography}.
\end{remark}

\begin{remark}
\label{remark:dplanetransform}
Similar results as in theorems~\ref{thm:ucpnormaloperator} and~\ref{thm:xrayscalarpolynomial} also hold for the $d$-plane transform~$\dplane$ when~$d$ is odd (see corollaries 1 and 2 on page 646 in~\cite{CMR-ucp-higher-order-laplacians}). The $d$-plane transform~$\dplane$ takes a scalar field and integrates it over $d$-dimensional affine planes where $0<d<n$. The case $d=1$ corresponds to the X-ray transform and $d=n-1$ to the Radon transform. The normal operator of the $d$-plane transform is the composition~$\nod=\dplane^*\dplane$ where~$\dplane^*$ is the adjoint of~$\dplane$ and it can be expressed as $\nod=(-\Delta)^{-d/2}$ up to a constant factor (see~\cite{CMR-ucp-higher-order-laplacians, HE:integral-geometry-radon-transforms}). Hence~$\nod$ admits the same unique continuation property as in theorem~\ref{thm:ucppolynomial} for functions in $\cdistr(\R^n)\cap\adm_V$ or $C_\infty(\R^n)\cap\adm_V$ provided~$d$ is odd. The unique continuation of~$\nod$ then implies a similar uniqueness result as in theorem~\ref{thm:xrayscalarpolynomial} for a partial data problem of the $d$-plane transform~$\dplane$ when~$d$ is odd.
\end{remark}

From the unique continuation of fractional Laplacians we also obtain a partial data result for the X-ray transform of vector fields. The normal operators satisfy the relationship $\der(\no_1 F)=\no_0 (\der F)$ up to a constant factor (see lemma~\ref{lemma:scalarvectornormaloperator}). Hence the unique continuation and partial data problems of vector fields can be reduced to the corresponding problems for scalar fields, namely the components $(\der F)_{ij}$. 

The next theorems generalize the results in~\cite{IM-one-forms-partial-data} where the authors assume that $\der F|_V=0$ instead of $(\der F)_{ij}\in\adm_V$.
\begin{theorem}
\label{thm:ucpnormalvector}
Let $n\geq 2$ and $F\in (\cdistr(\R^n))^n$ such that $(\der F)_{ij}\in\adm_V$ for all $i, j=1, \dotso, n$ where $V\subset\R^n$ is some nonempty open set. If $\partial^\beta (\der(\no_1 F))(x_0)=0$ componentwise for some $x_0\in V$ and all $\beta\in\N^n$, then $F=\der\phi$ for some $\phi\in\cdistr(\R^n)$.
\end{theorem}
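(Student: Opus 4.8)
The plan is to reduce the statement about the vector field $F$ to the scalar unique continuation result, Theorem~\ref{thm:strongerucp}, applied to each component $(\der F)_{ij}$. First I would invoke lemma~\ref{lemma:scalarvectornormaloperator}, which (up to a fixed nonzero constant) gives the identity $\der(\no_1 F)=\no_0(\der F)$, understood componentwise: for each pair $i,j$ one has $(\der(\no_1 F))_{ij}=c\,\no_0\bigl((\der F)_{ij}\bigr)$. Since $F\in(\cdistr(\R^n))^n$, each component $(\der F)_{ij}=\partial_i F_j-\partial_j F_i$ is again a compactly supported distribution, so $\no_0$ is well-defined on it and the scalar machinery applies. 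Combining this with the hypothesis that $\partial^\beta(\der(\no_1 F))(x_0)=0$ componentwise for all $\beta\in\N^n$, I get $\partial^\beta\bigl(\no_0((\der F)_{ij})\bigr)(x_0)=0$ for every $i,j$ and every multi-index $\beta$; that is, the scalar function $\no_0((\der F)_{ij})$ vanishes to infinite order at the point $x_0\in V$.

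Next, for each fixed $i,j$ I would write $g_{ij}:=(\der F)_{ij}\in\cdistr(\R^n)\cap\adm_V$; the membership in $\adm_V$ is exactly the hypothesis of the theorem. Recalling that $\no_0=c\,(-\Delta)^{-1/2}=c\,\fraclaplace$ with $s=-1/2$ (as noted in the discussion preceding Theorem~\ref{thm:ucpnormaloperator}), the vanishing to infinite order of $\no_0 g_{ij}$ at $x_0$ is precisely the hypothesis $\partial^\beta(\fraclaplace g_{ij})(x_0)=0$ for all $\beta$ with $n\geq 2$ and $s=-1/2\in(-n/2,\infty)\setminus\Z$. Hence Theorem~\ref{thm:strongerucp} applies and yields $g_{ij}=0$, i.e. $(\der F)_{ij}=0$. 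Since this holds for all $i,j=1,\dotso,n$, we conclude $\der F=0$ in $\R^n$.

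Finally I would convert $\der F=0$ into the claimed potential representation. Since $F$ is a compactly supported distributional one-form with vanishing exterior derivative, it is closed; by the Poincaré lemma on $\R^n$ it is exact, so $F=\der\phi$ for some distribution $\phi$. The remaining point is to check that $\phi$ can be taken in $\cdistr(\R^n)$: because $F$ has compact support, $F$ vanishes on each connected component of the complement of $\spt F$; on the unbounded component, $\phi$ is locally constant, and after subtracting that constant $\phi$ is supported in the convex hull of $\spt F$, hence compactly supported. This is the one genuinely non-formal step, and I expect it to be the main technical obstacle — everything before it is a direct translation through lemma~\ref{lemma:scalarvectornormaloperator} and an application of Theorem~\ref{thm:strongerucp}, whereas here one must argue at the level of distributions that the primitive inherits compact support. (The argument is essentially the same as in~\cite{IM-one-forms-partial-data}, where the analogous conclusion is drawn from $\der F|_V=0$.)
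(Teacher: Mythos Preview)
Your proposal is correct and follows essentially the same route as the paper: reduce via lemma~\ref{lemma:scalarvectornormaloperator} to the scalar case, apply Theorem~\ref{thm:strongerucp} with $s=-1/2$ to each $(\der F)_{ij}\in\cdistr(\R^n)\cap\adm_V$, and finish with the Poincar\'e lemma. The one place you overestimate the difficulty is the last step: the compactly supported version of the Poincar\'e lemma is already recorded as lemma~\ref{lemma:poincarelemma} (with references), so there is no ``non-formal'' obstacle---you may simply cite it rather than rederive compact support of~$\phi$ by hand.
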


\begin{theorem}
\label{thm:xrayvectorpolynomial}
Let $n\geq 2$ and $F\in (\cdistr(\R^n))^n$ such that $(\der F)_{ij}\in\adm_V$ for all $i, j=1, \dotso, n$ where $V\subset\R^n$ is some nonempty open set. If $\xrt_1 F=0$ on all lines intersecting~$V$, then $F=\der\phi$ for some $\phi\in\cdistr(\R^n)$.
\end{theorem}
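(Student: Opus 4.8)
The plan is to reduce Theorem~\ref{thm:xrayvectorpolynomial} to the scalar partial data result, Theorem~\ref{thm:xrayscalarpolynomial}, by passing to the components of the exterior derivative~$\der F$. First I would recall the relation $\der(\no_1 F)=c\,\no_0(\der F)$ from lemma~\ref{lemma:scalarvectornormaloperator}, which holds componentwise up to a nonzero constant~$c$; here $\no_0$ acts on each component $(\der F)_{ij}$ separately. Since $F\in(\cdistr(\R^n))^n$, each $(\der F)_{ij}=\partial_i F_j-\partial_j F_i$ is again a compactly supported distribution, and by hypothesis $(\der F)_{ij}\in\adm_V$, so each component lies in $\cdistr(\R^n)\cap\adm_V$ and is eligible for the scalar theorem.

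The second step is to turn the vanishing of the line integrals of~$F$ into information about $\no_0(\der F)_{ij}$. If $\xrt_1 F=0$ on all lines intersecting~$V$, then I would argue that $\no_1 F$ vanishes on~$V$; indeed, the normal operator $\no_1=\xrt_1^*\xrt_1$ is built by integrating $\xrt_1 F$ against line measures through each point, so for $x\in V$ the value $(\no_1 F)(x)$ only sees lines through~$x$, all of which intersect~$V$, hence $\no_1 F|_V=0$. (This is exactly the reasoning used to prove the scalar Theorem~\ref{thm:xrayscalarpolynomial} from Theorem~\ref{thm:ucpnormaloperator}, and I would cite or mirror that.) Consequently $\der(\no_1 F)|_V=0$, so by lemma~\ref{lemma:scalarvectornormaloperator} we get $\no_0(\der F)_{ij}|_V=0$ for every pair $i,j$. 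Applying Theorem~\ref{thm:xrayscalarpolynomial} — or more directly its engine, Theorem~\ref{thm:ucpnormaloperator} — to each scalar field $(\der F)_{ij}\in\cdistr(\R^n)\cap\adm_V$ yields $(\der F)_{ij}=0$ for all $i,j$, i.e. $\der F=0$.

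The final step is to conclude that a compactly supported distributional vector field with $\der F=0$ is a gradient: the condition $\partial_i F_j=\partial_j F_i$ on all of~$\R^n$, together with simple connectedness, gives a potential $\phi\in\distr(\R^n)$ with $F=\der\phi$ (e.g. by the Poincar\'e lemma for currents, or by noting $\hat F(\xi)=\xi\,g(\xi)$ componentwise so $F=\der\phi$ with $\hat\phi$ suitably defined). Since $F$ is compactly supported, $\der\phi$ is compactly supported, and then $\phi$ can be taken to be compactly supported as well (outside a large ball $\phi$ is locally constant and must be the constant determined by the unbounded component, which one normalizes to zero), giving $\phi\in\cdistr(\R^n)$.

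I expect the only genuinely delicate point to be the passage from $\xrt_1 F=0$ on lines through~$V$ to $\no_1 F|_V=0$ and then to $\der(\no_1 F)|_V=0$: one must check that differentiating $\no_1 F$ still only involves lines through a neighborhood of~$x$, so that the local vanishing is preserved under $\der$; this is where the precise definitions in~\eqref{eq:adjointvector} and~\eqref{eq:normaloperatorvectorfield} and lemma~\ref{lemma:scalarvectornormaloperator} do the real work. Everything else — membership of the components in $\cdistr(\R^n)\cap\adm_V$, the invocation of the scalar theorem, and the recovery of the potential — is routine given the results already established.
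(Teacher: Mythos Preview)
Your proposal is correct and follows essentially the same route as the paper: from $\xrt_1 F=0$ on lines through $V$ deduce $\no_1 F|_V=0$, use lemma~\ref{lemma:scalarvectornormaloperator} to get $\no_0(\der F)_{ij}|_V=0$, apply the scalar unique continuation to each component, and finish with the Poincar\'e lemma (lemma~\ref{lemma:poincarelemma}). The paper packages the middle steps into theorem~\ref{thm:ucpnormalvector} (routing through the infinite-order-vanishing version, theorem~\ref{thm:strongerucp}) rather than citing theorem~\ref{thm:ucpnormaloperator} directly as you do, and your worry about the ``delicate point'' is unfounded: differentiation is a local operator, so $\no_1 F|_V=0$ immediately yields $\der(\no_1 F)|_V=0$.
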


In light of the decomposition $F=\sol{F}+\der\phi$ of a vector field into a solenoidal part and a potential part, the conclusion $F=\der\phi$ of theorem~\ref{thm:xrayvectorpolynomial} can be recast as $\sol{F}=0$.
Therefore theorem~\ref{thm:xrayvectorpolynomial} can be seen as a solenoidal injectivity result in terms of partial data (see~\cite{IM-one-forms-partial-data} and~\cite{PSU-tensor-tomography-progress, SHA-integral-geometry-tensor-fields}).
Theorem~\ref{thm:xrayvectorpolynomial} holds also for vector fields $F\in(\schwartz(\R^n))^n$ which components are Schwartz functions since in that case $(\der F)_{ij}\in C_\infty(\R^n)\cap\adm_V$.

\subsection{Related results}
\label{sec:relatedresults}
There are some earlier unique continuation and partial data results for scalar and vector fields. The partial data problem for scalar fields has a unique solution if $f|_V$ vanishes~\cite{CNDK-solving-interior-problem-ct-with-apriori-knowledge, IM-unique-continuation-riesz-potential, KEQ-wavelet-methods-ROI-tomography}, $f|_V$ is polynomial or piecewise polynomial~\cite{KKW-stability-of-interior-problems, KEQ-wavelet-methods-ROI-tomography, YYJW-high-order-TV-minimization} or~$f|_V$ is real analytic~\cite{KKW-stability-of-interior-problems}.
A recent partial data result in two dimensions with attenuated X-ray data on an arc can be found in~\cite{FST-partial-inversion-2D}.
A complementary result is the Helgason support theorem: if the integrals of~$f$ vanish on all lines not intersecting a given compact and convex set, then~$f$ has to vanish outside that set~\cite{HE:integral-geometry-radon-transforms, SU:microlocal-analysis-integral-geometry}.
The normal operator of the X-ray transform of scalar fields has a unique continuation property under the assumptions $\no_0 f|_V=f|_V=0$ \cite{IM-unique-continuation-riesz-potential}.
This is a special case of a more general unique continuation property of fractional Laplacians~\cite{CMR-ucp-higher-order-laplacians, GSU-calderon-problem-fractional-schrodinger}.
There are also partial data and unique continuation results for the $d$-plane transform of scalar fields when~$d$ is odd, including the X-ray transform as a special case $d=1$ (see~\cite{CMR-ucp-higher-order-laplacians} and remark~\ref{remark:dplanetransform}).

The partial data problem of vector fields is known to be uniquely solvable up to potential fields, if $\der F|_V=0$~\cite{IM-one-forms-partial-data}. Similarly, the normal operator of the X-ray transform of vector fields has a unique continuation property under the assumptions $\no_1 F|_V=\der F|_V=0$~\cite{IM-one-forms-partial-data}. There are other partial data results for vector fields where one knows the integrals of~$F$ over lines which are parallel to a finite set of planes~\cite{JUH-principles-of-doppler-tomography, SCHU-3d-doppler-transform-reconstruction-and-kernels, SHA-vector-tomography-incomplete-data} or which intersect a certain type of curve~\cite{DEN-inversion-of-3d-tensor-fields, RA-microlocal-analysis-doppler-transform, VER-integral-geometry-symmetric-tensor-incomplete}. There is also a Helgason-type support theorem for vector fields: if the integrals of~$F$ vanish on all lines not intersecting a given compact and convex set, then~$\der F$ vanishes outside that set~\cite{IM-one-forms-partial-data, SU:microlocal-analysis-integral-geometry}. 

The normal operator of scalar fields, the normal operator of vector fields and the fractional Laplacian all admit stronger versions of the unique continuation property (see~\cite{CMR-ucp-higher-order-laplacians, FF-unique-continuation-fractional-ellliptic-equations, FF-unique-continuation-higher-laplacian, GR-fractional-laplacian-strong-unique-continuation, IM-unique-continuation-riesz-potential, IM-one-forms-partial-data, RU-unique-continuation-scrodinger-rough-potentials, YA-higher-order-laplacian} and theorems~\ref{thm:strongerucp} and~\ref{thm:ucpnormalvector}). Other applications of unique continuation of fractional Laplacians include fractional inverse problems. Especially, the unique continuation of~$\fraclaplace$ is used to prove uniqueness for different versions of the fractional Calder\'on problem (see e.g.~\cite{BGU-lower-order-nonlocal-perturbations, CLR18, CO-magnetic-fractional-schrodinger, CMR-ucp-higher-order-laplacians, CMRU-higher-order-fractional-perturbations, GSU-calderon-problem-fractional-schrodinger}).

\section{The X-ray transform and distributions}
\label{sec:preliminaries}
In this section we define the X-ray transform of scalar and vector fields, and introduce the distribution spaces we use in our main theorems. The basic theory of distributions and Sobolev spaces can be found in~\cite{GR-distributions-and-operators, HO:analysis-of-pdos, ML-strongly-elliptic-systems, MI:distribution-theory2, TRE:topological-vector-spaces-distributions} and the X-ray transform is treated for example in~\cite{NA-mathematics-computerized-tomography, SHA-integral-geometry-tensor-fields, SU:microlocal-analysis-integral-geometry}.

\subsection{Distributions and Sobolev spaces}
\label{sec:distributions}

The function spaces needed to state our theorems were described in section~\ref{sec:notation}.

We let~$\smooth(\R^n)$ be the space of smooth functions, $\schwartz(\R^n)$ is the Schwartz space and~$\csmooth(\R^n)$ is the space of compactly supported smooth functions. We equip all these spaces with their standard topologies. The corresponding duals are denoted by~$\cdistr(\R^n)$, $\tempered(\R^n)$ and~$\distr(\R^n)$. Elements in~$\cdistr(\R^n)$ are identified with distributions of compact support and elements in~$\tempered(\R^n)$ are called tempered distributions. 

We define the space of rapidly decreasing distributions~$\rapidly(\R^n)\subset\tempered(\R^n)$ as follows: $f\in\rapidly(\R^n)$ if and only if $\hat{f}\in\slowly(\R^n)$ where $\hat{f}=\fourier(f)$ is the Fourier transform of tempered distributions.
Here~$\slowly(\R^n)$ is the space of polynomially growing smooth functions, i.e.~$f\in\slowly(\R^n)$ if~$f$ and all its derivatives are polynomially bounded. 
We note that the Fourier transform is an isomorphism $\fourier\colon\tempered(\R^n)\rightarrow\tempered(\R^n)$ and also an isomorphism $\fourier\colon L^2(\R^n)\rightarrow L^2(\R^n)$.
We have the inclusions $\cdistr(\R^n)\subset\rapidly(\R^n)\subset\tempered(\R^n)\subset\distr(\R^n)$.
As a special case we have $\schwartz(\R^n)\subset C_\infty(\R^n)\subset\rapidly(\R^n)$ where~$C_\infty(\R^n)$ is the set of all continuous functions which decay faster than any polynomial at infinity. 

The fractional $L^2$-Sobolev space of order $r\in\R$ is defined as
\begin{equation}
H^r(\R^\dimens)=\{f\in\tempered(\R^\dimens): \langle\cdot\rangle^r\hat{f}\in L^2(\R^\dimens)\}
\end{equation}
where $\langle \xi\rangle=(1+\abs{\xi}^2)^{1/2}$. The space~$H^r(\R^n)$ is equipped with the norm
\begin{equation}
\aabs{f}_{H^{r}(\R^\dimens)}=\aabs{\langle\cdot\rangle^r\hat{f}}_{L^2(\R^\dimens)}
\end{equation}
and $H^r(\R^n)$ becomes a separable Hilbert space for every $r\in\R$.
It follows that the spaces are nested, i.e. $H^r(\R^n)\hookrightarrow H^t(\R^n)$ continuously when $r\geq t$.
One can isomorphically identify~$H^{-r}(\R^n)$ with the dual~$(H^r(\R^n))^*$ for all $r\in\R$.
We define the following spaces
\begin{equation}
H^\infty(\R^n)=\bigcap_{r\in\R}H^r(\R^n), \quad H^{-\infty}(\R^n)=\bigcup_{r\in\R}H^r(\R^n).
\end{equation}
It holds that $\rapidly(\R^n)\subset H^{-\infty}(\R^n)\subset\tempered(\R^n)$ and $\schwartz(\R^n)\subset H^\infty(\R^n)$. Further, using the Sobolev embedding one can see that $H^\infty(\R^n)=C^\infty_{L^2}(\R^n)$ where $f\in C^\infty_{L^2}(\R^n)$ if~$f$ is smooth and~$f$ and all its derivatives belong to~$L^2(\R^n)$ (see~\cite[Theorem 6.12]{GR-distributions-and-operators}).

The fractional Laplacian is defined as
\begin{equation}
\fraclaplace f=\ifourier(\abs{\cdot}^{2s}\hat{f})
\end{equation}
where~$\ifourier$ is the inverse Fourier transform of tempered distributions.
It follows that~$\fraclaplace f$ is well-defined as a tempered distribution for $f\in\rapidly(\R^n)$ when $s\in(-n/2, \infty)\setminus\Z$, and for $f\in H^r(\R^n)$ when $s\in(-n/4, \infty)\setminus\Z$ (see~\cite[Section 2.2]{CMR-ucp-higher-order-laplacians}). We have that $\fraclaplace\colon H^r(\R^n)\rightarrow H^{r-2s}(\R^n)$ is continuous whenever $s\in (0, \infty)\setminus\Z$ and~$\fraclaplace$ also admits a Poincar\'e-type inequality for $s\in (0, \infty)\setminus\Z$ (see~\cite{CMR-ucp-higher-order-laplacians}). We note that~$\fraclaplace$ is a non-local operator in contrast to the ordinary Laplacian~$(-\Delta)$. The non-locality implies a unique continuation property (see theorem~\ref{thm:ucppolynomial} and lemma~\ref{lemma:uniquecontinuationoffractionallaplacian}) which cannot hold for local operators.

We also use local versions of distributions and fractional Sobolev spaces. Let $\Omega\subset\R^n$ be an open set and $r\in\R$. We denote by $\csmooth(\Omega)$, $\distr(\Omega)$ etc. the test function and distribution spaces defined in~$\Omega$. We define the local Sobolev space~$H^r(\Omega)$ as 
\begin{align}
H^r(\Omega)=\{g\in\distr(\Omega): g=f|_\Omega \ \text{for some } f\in H^r(\R^n)\}.
\end{align}
In other words, the space~$H^r(\Omega)$ consists of restrictions of distributions $f\in H^r(\R^n)$. The local Sobolev space is equipped with the quotient norm
\begin{align}
\label{eq:quotientnorm}
\aabs{g}_{H^r(\Omega)}=\inf\{\aabs{f}_{H^r(\R^n)}: f\in H^r(\R^n) \ \text{such that } f|_\Omega=g\}.
\end{align}
Then $H^r(\Omega)$ becomes a separable Hilbert space and the restriction map $|_\Omega\colon H^r(\R^n)\rightarrow H^r(\Omega)$ is continuous.
If $r\geq t$, then $H^r(\Omega)\hookrightarrow H^t(\Omega)$ continuously. One can also isomorphically identify~$H^{-r}(\Omega)$ as the dual~$(\widetilde{H}^r(\Omega))^*$ for every $r\in\R$ where~$\widetilde{H}^r(\Omega)$ is the closure of~$\csmooth(\Omega)$ in~$H^r(\R^n)$ (see~\cite{CWHM-sobolev-spaces-on-non-lipchtiz-domains} and~\cite{ML-strongly-elliptic-systems}).
If $r\geq 0$, then $H^r(\Omega)\subset W^r(\Omega)$ where~$W^r(\Omega)$ is the Sobolev-Slobodeckij space which is defined by using weak derivatives of $L^2$-functions (see~\cite{ML-strongly-elliptic-systems} for a precise definition). If~$\Omega$ is a Lipschitz domain, then we have the equality $H^r(\Omega)=W^r(\Omega)$ for all $r\geq 0$.

More generally, we define the vector-valued test function space~$(\csmooth(\R^\dimens))^\dimens$ by saying that $\varphi\in (\csmooth(\R^\dimens))^\dimens$ if and only if $\varphi=(\varphi_1, \dotso, \varphi_\dimens)$ and $\varphi_i\in\csmooth(\R^\dimens)$ for all $i=1, \dotso, \dimens$. A sequence converges to zero in $(\csmooth(\R^\dimens))^\dimens$ if and only if all its components converge to zero in~$\csmooth(\R^n)$. We then define the space of vector-valued distributions~$(\distr(\R^\dimens))^\dimens$ by saying that $F\in(\distr(\R^\dimens))^\dimens$ if and only if $F=(F_1, \dotso, F_n)$ where $F_i\in\distr (\R^\dimens)$ for all $i=1, \dotso , \dimens$. The duality pairing is defined as $\ip{F}{\varphi}=\sum_{i=1}^\dimens\ip{F_i}{\varphi_i}$. The test function spaces~$(\smooth(\R^\dimens))^\dimens$ and $(\schwartz(\R^\dimens))^\dimens$, and the corresponding distribution spaces $(\cdistr(\R^\dimens))^\dimens$ and~$(\tempered(\R^\dimens))^\dimens$ are defined analogously. The elements in~$(\cdistr(\R^\dimens))^\dimens$ are called compactly supported vector-valued distributions. Vector-valued distributions are a special case of currents (continuous linear functionals in the space of differential forms, see~\cite[Section III]{deRham-differentiable-manifolds}). 

For $F\in(\distr(\R^n))^n$ we define the exterior derivative or curl of~$F$ as a matrix which components are $(\der F)_{ij}=\partial_i F_j-\partial_j F_i$. It follows from the Poincar\'e lemma (see e.g.~\cite[Theorem 2.1]{MA-poincare-derham-theorems} and lemma~\ref{lemma:poincarelemma}) that if $\der F=0$, then $F=\der\phi$ for some $\phi\in\distr(\R^n)$ where~$\der\phi$ is the distributional gradient of~$\phi$.

\subsection{The X-ray transform of scalar fields}
\label{sec:xrayscalar}

Let $f\in \csmooth(\R^n)$ be a scalar field.
The X-ray transform~$\xrt_0$ is defined as
\begin{equation}
\label{eq:xrayscalar1}
\xrt_0 f(\gamma)=\int_\gamma f\der s
\end{equation}
where~$\gamma$ is an oriented line in~$\R^n$.
When we parameterize the set of all oriented lines with the set
\begin{equation}
\label{eq:parametrizationlines}
\Gamma=\{(z,\theta): \theta\in S^{\dimens-1}, \ z\in\theta^{\perp}\}
\end{equation}
the X-ray transform becomes
\begin{equation}
\label{eq:xrayscalar2}
\xrt_0 f(z,\theta)=\int_{\R}f(z+s\theta)\der s,
\quad
f\in\csmooth(\R^n).
\end{equation}
The adjoint or back-projection~$\xrt_0^*$ is defined as
\begin{equation}
\label{eq:adjointscalar}
\xrt_0^*\psi(x)=\int_{S^{\dimens-1}}\psi(x-(x\cdot\theta)\theta, \theta)\der\theta,
\quad
\psi\in \smooth(\Gamma).
\end{equation}
One then sees that $\xrt_0\colon\csmooth(\R^\dimens)\rightarrow\csmooth(\Gamma)$ and $\xrt_0^*\colon\smooth(\Gamma)\rightarrow\smooth(\R^\dimens)$ are continuous maps.
Using duality we can define $\xrt_0\colon\cdistr(\R^\dimens)\rightarrow\cdistr(\Gamma)$ and $\xrt_0^*\colon\distr(\Gamma)\rightarrow\distr(\R^\dimens)$ by requiring that
\begin{align}
\langle \xrt_0 f, \varphi\rangle&=\langle f, \xrt_0^*\varphi\rangle, \quad f\in\cdistr(\R^n), \ \varphi\in\smooth(\Gamma) \\
\langle \xrt_0^*\psi, \eta\rangle&=\langle \psi, \xrt_0\eta\rangle, \quad \psi\in\distr(\Gamma), \ \eta\in\csmooth(\R^n),
\end{align}
where $\langle\cdot, \cdot\rangle$ is the dual pairing.

The normal operator is $\no_0=\xrt_0^*\xrt_0$ and it can be expressed as the convolution
\begin{equation}
\label{eq:normaloperatorscalar}
\no_0f(x)=2(f\ast\abs{\cdot}^{1-n})(x).
\end{equation}
Using duality the normal operator extends to a map $\no_0\colon\cdistr(\R^\dimens)\rightarrow\distr(\R^\dimens)$ and the convolution formula holds in the sense of distributions. The normal operator can be seen as the fractional Laplacian $(-\Delta)^{-1/2}$ up to a constant factor and we have the reconstruction formula
\begin{equation}
\label{eq:invertingnormaloperatorscalar}
f=c_{0, \dimens}(-\Delta)^{1/2}\no_0f
\end{equation}
where $c_{0, n}$ is a constant which depends on dimension. Both~$\xrt_0$ and~$\no_0$ are also defined for functions~$f\in C_\infty(\R^n)$.

\subsection{The X-ray transform of vector fields}
\label{sec:xrayvector}
Let $F\in (\csmooth(\R^n))^n$ be a vector field. The X-ray transform~$\xrt_1$ is defined as
\begin{equation}
\label{eq:xrayvector1}
\xrt_1 F(\gamma)=\int_\gamma F\cdot\der\overline{s}
\end{equation}
where $\gamma$ is an oriented line. Using the parametrization~$\Gamma$ for oriented lines (see equation~\eqref{eq:parametrizationlines}) we have
\begin{equation}
\label{eq:xrayvector2}
\xrt_1 F(z, \theta)=\int_{\R}F(z+s\theta)\cdot\theta\der s, \quad F\in (\csmooth(\R^n))^n.
\end{equation}
We define the adjoint~$\xrt_1^*$ as the vector-valued operator
\begin{equation}
\label{eq:adjointvector}
(\xrt_1^*\psi)_i(x)=\int_{S^{\dimens-1}}\theta_i \psi(x-(x\cdot\theta)\theta, \theta)\der\theta, \quad \psi\in \smooth(\Gamma).
\end{equation}
One sees that $\xrt_1\colon(\csmooth(\R^\dimens))^\dimens\rightarrow\csmooth(\Gamma)$ and $\xrt_1^*\colon\smooth(\Gamma)\rightarrow(\smooth(\R^\dimens))^\dimens$ are continuous and by duality we can define $\xrt_1\colon(\cdistr(\R^\dimens))^\dimens\rightarrow\cdistr(\Gamma)$ and $\xrt_1^*\colon\distr(\Gamma)\rightarrow(\distr(\R^\dimens))^\dimens$ by setting
\begin{align}
\ip{\xrt_1 F}{\varphi}&=\ip{F}{\xrt_1^*\varphi}, \quad F\in(\cdistr(\R^n))^n, \ \varphi\in\smooth(\Gamma) \\
\ip{\xrt_1^* \psi}{\eta}&=\ip{\psi}{\xrt_1\eta}, \quad \psi\in\distr(\Gamma), \ \eta\in(\csmooth(\R^n))^n.
\end{align} 

We define the normal operator as $\no_1=\xrt_1^*\xrt_1$ and it can be expressed in terms of convolution
\begin{equation}
\label{eq:normaloperatorvectorfield}
(\no_1 F)_i=\sum_{j=1}^\dimens\frac{2x_ix_j}{\abs{x}^{\dimens+1}}\ast F_j.
\end{equation}
The normal operator extends to a map $\no_1\colon (\cdistr(\R^\dimens))^\dimens\rightarrow (\distr(\R^\dimens))^\dimens$ by duality and the convolution formula holds in the sense of distributions. One has the reconstruction formula for the solenoidal part~$\sol{F}$ in the solenoidal decomposition $F=\sol{F}+\der\phi$ (see for example~\cite{SHA-integral-geometry-tensor-fields, SU:microlocal-analysis-integral-geometry})
\begin{equation}
\label{eq:invertingsolenoidalpartfromnormaloperator}
\sol{F}=c_{1, \dimens}(-\Delta)^{1/2}\no_1 F
\end{equation}
where~$c_{1, \dimens}$ is a constant depending on dimension and~$(-\Delta)^{1/2}$ operates componentwise on~$\no_1 F$. Both~$\xrt_1$ and~$\no_1$ are also defined for vector fields $F\in(\schwartz(\R^n))^n$.

\section{Partial differential operators and admissible functions}
\label{sec:condition}
In this section we introduce constant coefficient partial differential operators and also study the space of admissible functions~$\adm_V$ in more detail. A comprehensive treatment of constant coefficient partial differential operators can be found in H\"ormander's book~\cite{HO:analysis-of-pdos2}.

Let us denote by~$\pdos$ the set of all polynomials in~$\R^n$ with complex coefficients excluding the zero polynomial $P\equiv 0$.
A polynomial $P\in\pdos$ of degree $m\in\N$ can be identified with the constant coefficient partial differential operator~$P(D)$ of order $m\in \N$ as 
\begin{equation}
P(D)=\sum_{|\alpha|\leq m}a_\alpha D^\alpha, \quad a_\alpha\in\C ,
\end{equation}
where $D^\alpha=D^{\alpha_1}_1\cdots D^{\alpha_n}_n$, $D_j=-i\partial_j$ and $\alpha= (\alpha_1, \dotso , \alpha_n)\in\N^n$ is a multi-index such that $\abs{\alpha}=\alpha_1+\dotso +\alpha_n$. In fact, using the Fourier transform one sees that
\begin{align}
\widehat{P(D)}=P(\xi)=\sum_{|\alpha|\leq m}a_\alpha \xi^\alpha
\end{align}
where $\xi\in\R^n$ and $\xi^\alpha=\xi^{\alpha_1}\cdots\xi^{\alpha_n}$. The polynomial~$P(\xi)$ is also known as the full symbol of~$P(D)$. If $g\in\distr(\Omega)$ where $\Omega\subset\R^n$ is an open set, then one can define the distributional derivative $P(D)g\in\distr(\Omega)$ by duality. Further, it holds that $P(D)\colon H^r(\Omega)\rightarrow H^{r-m}(\Omega)$ is continuous with respect to the quotient norm~\cite[Theorem 12.15]{MI:distribution-theory2} (see equation~\eqref{eq:quotientnorm}).

The set of admissible functions~$\adm_V$ which we use in our main theorems can be written as the union
\begin{align}
\label{eq:admissibleunion}
\adm_V=\bigcup_{\substack{P\in\pdos \\ r\in \R}}\mathcal{H}^r_{P, V}(\R^n)
\end{align}
where $V\subset\R^n$ is some nonempty open set and $\mathcal{H}^r_{P, V}(\R^n)=\{f\in H^r(\R^n): P(D)f|_V=0\}$. We note that $\adm_V\subset H^{-\infty}(\R^n)$. The following proposition implies that the sets~$\mathcal{H}^r_{P, V}(\R^n)$ in the union~\eqref{eq:admissibleunion} are also Hilbert spaces.

\begin{proposition}
The subset $\mathcal{H}^r_{P, V}(\R^n)\subset H^r(\R^n)$ is a separable Hilbert space for all $r\in\R$, $P\in\pdos$ and nonempty open set $V\subset\R^n$.
\end{proposition}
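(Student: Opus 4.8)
The plan is to exhibit $\mathcal{H}^r_{P,V}(\R^n)$ as a closed subspace of the separable Hilbert space $H^r(\R^n)$; closed subspaces of separable Hilbert spaces are themselves separable Hilbert spaces, so this suffices. First I would observe that $\mathcal{H}^r_{P,V}(\R^n)$ is the kernel of the map
\begin{equation}
T\colon H^r(\R^n)\rightarrow H^{r-m}(V),\qquad Tf=P(D)f|_V,
\end{equation}
where $m$ is the order of $P(D)$, obtained as the composition of $P(D)\colon H^r(\R^n)\rightarrow H^{r-m}(\R^n)$ with the restriction $|_V\colon H^{r-m}(\R^n)\rightarrow H^{r-m}(V)$. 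In particular $\mathcal{H}^r_{P,V}(\R^n)$ is a linear subspace.

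Next I would establish that $T$ is continuous. The restriction map $|_V\colon H^{r-m}(\R^n)\rightarrow H^{r-m}(V)$ is continuous with respect to the quotient norm on $H^{r-m}(V)$, as recorded in section~\ref{sec:distributions}. For the operator $P(D)$ I can argue directly on the Fourier side: $\widehat{P(D)f}=P(\xi)\hat f$ and $|P(\xi)|\leq C\langle\xi\rangle^m$ for some constant $C$ depending only on $P$, so $\aabs{P(D)f}_{H^{r-m}(\R^n)}=\aabs{\langle\cdot\rangle^{r-m}P(\cdot)\hat f}_{L^2}\leq C\aabs{\langle\cdot\rangle^{r}\hat f}_{L^2}=C\aabs{f}_{H^r(\R^n)}$. (Alternatively one may cite the continuity of $P(D)\colon H^r(\Omega)\rightarrow H^{r-m}(\Omega)$ from~\cite{MI:distribution-theory2} mentioned above.) Hence $T$ is a bounded linear operator between Hilbert spaces.

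Since $T$ is continuous, its kernel $\mathcal{H}^r_{P,V}(\R^n)=\ker T$ is a closed linear subspace of $H^r(\R^n)$. A closed subspace of a Hilbert space is complete in the induced inner product, hence a Hilbert space; and a subspace of a separable metric space is separable. Therefore $\mathcal{H}^r_{P,V}(\R^n)$ is a separable Hilbert space, and since it is closed it is also nonempty (it contains $0$), completing the proof. I do not anticipate a genuine obstacle here; the only point requiring a little care is making sure the target space of $T$ is given a well-defined norm, which is why I pass through the \emph{local} Sobolev space $H^{r-m}(V)$ with its quotient norm rather than attempting to restrict to $L^2(V)$ directly, and the continuity of $P(D)$ on full-space Sobolev scales, which is immediate from the symbol bound.
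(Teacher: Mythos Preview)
Your proof is correct and essentially the same as the paper's: both show $\mathcal{H}^r_{P,V}(\R^n)$ is closed in $H^r(\R^n)$ by continuity of $f\mapsto P(D)f|_V$ between Sobolev spaces, then invoke that closed subspaces of separable Hilbert spaces are separable Hilbert spaces. The only cosmetic difference is that you apply $P(D)$ on the full space and then restrict, whereas the paper restricts first and then applies $P(D)$ on the local space; since derivatives commute with restriction this is immaterial.
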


\begin{proof}
Clearly $\mathcal{H}^r_{P, V}(\R^n)$ is a linear subspace of~$H^r(\R^n)$. Let $f_k\in \mathcal{H}_{P, V}^r(\R^n)$ be a sequence such that $f_k\rightarrow f$ in $H^r(\R^n)$. Then by the continuity of the restriction map $|_V\colon H^r(\R^n)\rightarrow H^r(V)$ we have that $f_k|_V\rightarrow f|_V$ in $H^r(V)$. From the continuity of~$P(D)\colon H^r(V)\rightarrow H^{r-m}(V)$ we obtain that $0=P(D)f_k|_V\rightarrow P(D)f|_V$ in~$H^{r-m}(V)$, implying that $f\in\mathcal{H}^r_{P, V}(\R^n)$. Therefore~$\mathcal{H}_{P, V}^r(\R^n)$ is a closed subspace of the separable Hilbert space~$H^r(\R^n)$ and hence itself a separable Hilbert space.
\end{proof}

\begin{remark}
We note that in the smooth case we have that $\smooth_{P, V}(\R^n)=\{f\in\smooth(\R^n):P(D)f|_V=0\}\subset\smooth(\R^n)$ is a closed subspace of~$\smooth(\R^n)$ and hence a Fr\'echet space. More generally, $\distr_{P, V}(\R^n)=\{f\in\distr(\R^n): P(D)f|_V=0\}\subset\distr(\R^n)$ is sequentially closed in~$\distr(\R^n)$ under the weak$^*$ convergence. These two facts follow from the continuity of $P(D)\colon\smooth(\R^n)\rightarrow\smooth(\R^n)$ and $P(D)\colon\distr(\R^n)\rightarrow\distr(\R^n)$ with respect to the standard topologies. More topological properties of kernels of constant coefficient partial differential operators can be found in~\cite{WE-properties-kernels-pdos}.
\end{remark}

\begin{remark}
\label{remark:condition}
The interpretation of the condition $f\in\adm_V$ is the following. If $f\in\adm_V$, then there is some $r\in\R$ and some~$P\in\pdos$ such that $f\in H^r(\R^n)$ and $P(D)f|_V=0$. The distributional derivatives commute with restrictions, i.e. $P(D)f|_V=P(D)(f|_V)$ where $f|_V\in\distr(V)$. Since $f\in H^r(\R^n)$ we see that~$f|_V$ is not only a distribution but in addition $f|_V\in H^r(V)$ for some $r\in\R$. Therefore the existence of $r\in\R$ and $P\in\pdos$ for which $P(D)f|_V=0$ means that $f|_V\in H^r(V)$ and~$f|_V$ is a weak solution to some homogeneous constant coefficient partial differential equation. In other words, $f|_V$ satisfies
\begin{equation}
\sum_{|\alpha|\leq m}a_\alpha D^\alpha (f|_V)=0, \quad f|_V\in\bigcup_{r\in\R}H^r(V),
\end{equation}
for some coefficients $a_\alpha\in\C$ and some integer $m\in\N$.
\end{remark}

The following proposition is important in the uniqueness of the partial data problem.

\begin{proposition}
\label{prop:vectorspace}
The set $\adm_V\subset H^{-\infty}(\R^n)$ is a vector space for every nonempty open set $V\subset\R^n$.
\end{proposition}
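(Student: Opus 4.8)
The plan is to verify the two vector-space axioms directly, using the defining description $\adm_V=\bigcup_{P\in\pdos,\,r\in\R}\mathcal{H}^r_{P,V}(\R^n)$ from equation~\eqref{eq:admissibleunion}. Closure under scalar multiplication is immediate: if $f\in\adm_V$ with $P(D)f|_V=0$ for some $P\in\pdos$, then for any $\lambda\in\C$ we have $P(D)(\lambda f)|_V=\lambda\, P(D)f|_V=0$, and $\lambda f\in H^r(\R^n)$ whenever $f\in H^r(\R^n)$; hence $\lambda f\in\adm_V$ (the case $\lambda=0$ works since $f=0\in\mathcal{H}^r_{P,V}(\R^n)$ for any $P$). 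So the content is closure under addition.

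The main point — and the one mild obstacle — is that two admissible functions $f_1,f_2$ may be annihilated over $V$ by \emph{different} operators $P_1(D)$ and $P_2(D)$, so $f_1+f_2$ need not lie in $\mathcal{H}^r_{P_1,V}$ or $\mathcal{H}^r_{P_2,V}$. The fix is to pass to the composition: set $P=P_1P_2\in\pdos$, which is again a nonzero polynomial (the product of nonzero polynomials over the integral domain $\C[x_1,\dots,x_n]$ is nonzero), and note $P(D)=P_1(D)P_2(D)=P_2(D)P_1(D)$ since constant coefficient operators commute. Then, using that distributional derivatives commute with restriction to the open set $V$ (as recalled in remark~\ref{remark:condition}), one computes
\begin{align}
P(D)(f_1+f_2)|_V &= P_2(D)P_1(D)f_1|_V + P_1(D)P_2(D)f_2|_V \nonumber \\
&= P_2(D)\big(P_1(D)f_1|_V\big) + P_1(D)\big(P_2(D)f_2|_V\big) = 0,
\end{align}
because $P_1(D)f_1|_V=0$ and $P_2(D)f_2|_V=0$ by hypothesis.

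It remains to place $f_1+f_2$ in some Sobolev space: if $f_1\in H^{r_1}(\R^n)$ and $f_2\in H^{r_2}(\R^n)$, then by the nesting $H^{r}(\R^n)\hookrightarrow H^{t}(\R^n)$ for $r\geq t$ we have $f_1+f_2\in H^{r}(\R^n)$ with $r=\min(r_1,r_2)$. Therefore $f_1+f_2\in\mathcal{H}^{r}_{P,V}(\R^n)\subset\adm_V$. Since $0\in\adm_V$ (take $f=0$ with any $P\in\pdos$) and $\adm_V\subset H^{-\infty}(\R^n)$ is by construction a subset of the vector space $H^{-\infty}(\R^n)$, closure under addition and scalar multiplication gives that $\adm_V$ is a linear subspace, hence a vector space.
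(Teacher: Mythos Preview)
Your proof is correct and follows essentially the same approach as the paper: both use the product operator $P_1(D)P_2(D)$ (relying on commutativity of constant coefficient operators) to annihilate the sum over $V$, and both place the sum in $H^{\min(r_1,r_2)}(\R^n)$ via the nesting of Sobolev spaces. The only cosmetic difference is that the paper treats $f_1+\lambda f_2$ in one stroke rather than handling scalar multiplication and addition separately.
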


\begin{proof}
Let $f_1, f_2\in\adm_V$ and $\lambda\in\C$. This means that $f_1\in H^{r_1}(\R^n)$, $f_2\in H^{r_2}(\R^n)$ and $P_1(D)f_1|_V=P_2(D)f_2|_V=0$ for some $r_1, r_2\in\R$ and $P_1, P_2\in\pdos$. It follows that $f_1+\lambda f_2\in H^r(\R^n)$ where $r=\min\{r_1, r_2\}$ since the spaces $H^t(\R^n)$, $t\in\R$, are nested vector spaces. We also have that $P_1(D)P_2(D)(f_1+\lambda f_2)|_V=0$ since the distributional derivatives commute $P_1(D)P_2(D)=P_2(D)P_1(D)$. This implies that $f_1+\lambda f_2\in\adm_V$, i.e.~$\adm_V$ is a linear subspace of the vector space~$H^{-\infty}(\R^n)\subset\tempered(\R^n)$.
\end{proof}

\begin{remark}
\label{remark:uniquenesspartialdata}
The vector space structure of~$\adm_V$ is important since it implies that the partial data results we have proved in this article are indeed uniqueness results. Namely, if $f_1, f_2\in\cdistr(\R^n)\cap\adm_V$ (or $f_1, f_2\in C_\infty(\R^n)\cap\adm_V$) such that $\xrt_0f_1=\xrt_0 f_2$ on all lines intersecting~$V$, then $f_1-f_2\in\cdistr(\R^n)\cap\adm_V$ (or $f_1-f_2\in C_\infty(\R^n)\cap\adm_V$) and $\xrt_0(f_1-f_2)=0$ on all lines intersecting~$V$.  
Theorem~\ref{thm:xrayscalarpolynomial} then implies that $f_1-f_2=0$, i.e. the solution to the partial data problem is unique.
\end{remark}   

\section{Proofs of the main theorems}
\label{sec:proofs}
In this section we prove our main theorems. We need a few auxiliary results. The first one is a unique continuation result for fractional Laplacians and the second one is the Poincar\'e lemma for compactly supported vector-valued distributions.

\begin{lemma}[{\cite[Theorem 1.1]{CMR-ucp-higher-order-laplacians}}]
\label{lemma:uniquecontinuationoffractionallaplacian}
Let $\dimens \geq 1$, $s\in (-n/4,\infty)\setminus \Z$ and $u\in H^t(\R^\dimens)$ where $t\in\R$. If $(-\Delta)^s u|_V=0$ and $u|_V=0$ for some nonempty open set $V\subset\R^\dimens$, then $u=0$. The claim holds also for $s\in (-n/2, -n/4]\setminus\Z$ if $u\in\rapidly(\R^\dimens)$.
\end{lemma}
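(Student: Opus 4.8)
The plan is to reduce the statement to the well-known unique continuation principle for the fractional Laplacian of order $s\in(0,1)$ and then bootstrap it in $s$. First I would treat positive $s$. Writing $s=m+\sigma$ with $m\in\N$ and $\sigma\in(0,1)$ (possible since $s\notin\Z$), I use that $(-\Delta)^m$ is a \emph{local} operator: the hypothesis $u|_V=0$ forces $(-\Delta)^m u|_V=0$, while $(-\Delta)^\sigma\bigl((-\Delta)^m u\bigr)=(-\Delta)^s u$ gives $(-\Delta)^\sigma w|_V=0$ for $w:=(-\Delta)^m u\in H^{t-2m}(\R^n)$. If the order-$\sigma$ case is granted, then $w=0$, i.e. $|\xi|^{2m}\hat u=0$, so $\hat u$ is supported at the origin and $u$ is a polynomial; since $u\in H^t(\R^n)$, this forces $u=0$.

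For the base case $s\in(0,1)$ I would invoke the Caffarelli--Silvestre extension: let $\tilde u$ solve $\diver(y^{1-2s}\nabla\tilde u)=0$ in $\R^{n+1}_+$ with $\tilde u(\cdot,0)=u$, so that, up to a positive constant, $(-\Delta)^s u=-\lim_{y\to0^+}y^{1-2s}\partial_y\tilde u$. The two hypotheses then say precisely that $\tilde u$ and its weighted conormal derivative both vanish on $V\times\{0\}$. The strong unique continuation property for this degenerate/singular elliptic equation --- proved via an Almgren-type frequency function or a Carleman estimate adapted to the Muckenhoupt weight $y^{1-2s}$ --- gives $\tilde u\equiv 0$ near $V\times\{0\}$; since $\tilde u$ solves a uniformly elliptic equation in the interior $\R^{n+1}_+$, ordinary unique continuation propagates this to all of $\R^{n+1}_+$, and hence $u=0$. (Alternatively one may simply cite this $s\in(0,1)$ result from the literature.)

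For negative $s$ I would run the same reduction backwards. Set $w:=(-\Delta)^s u$, which is well-defined --- because $|\xi|^{2s}$ is then locally integrable against $\hat u$ --- provided $s>-n/4$ for $u\in H^t(\R^n)$ and $s>-n/2$ for $u\in\rapidly(\R^n)$ (here $\hat u$ is a slowly increasing function); this is exactly where these two thresholds enter. One checks that $u=(-\Delta)^{-s}w$ up to a distribution supported at the origin, which is a polynomial and hence vanishes in the ambient space, so $w|_V=0$ together with $(-\Delta)^{-s}w|_V=u|_V=0$ reduces us to the positive-order case applied to $w$. The genuinely hard step throughout is the strong unique continuation for the weighted equation $\diver(y^{1-2s}\nabla\tilde u)=0$ with vanishing Cauchy data on a boundary patch: the weight degenerates or blows up at $y=0$, so standard second-order UCP does not apply and one must establish a Carleman inequality (or frequency-function monotonicity) valid uniformly across the full range of exponents. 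The rest --- the Fourier-analytic bookkeeping, the locality reductions, and the elimination of polynomial/low-frequency remainders --- is routine once that estimate is available.
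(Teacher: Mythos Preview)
The paper does not prove this lemma; it is quoted as Theorem~1.1 of \cite{CMR-ucp-higher-order-laplacians} and used as a black box in the proofs of theorems~\ref{thm:ucppolynomial} and~\ref{thm:strongerucp}. There is therefore no ``paper's own proof'' to compare against.

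Your sketch for $s>0$ is essentially the argument in \cite{CMR-ucp-higher-order-laplacians}: decompose $s=m+\sigma$ with $m\in\N$ and $\sigma\in(0,1)$, use locality of $(-\Delta)^m$ to pass the vanishing on $V$ to $w=(-\Delta)^m u$, and settle the base case $\sigma\in(0,1)$ via the Caffarelli--Silvestre extension together with unique continuation for the $A_2$-weighted operator $\diver(y^{1-2\sigma}\nabla\,\cdot\,)$. (For $u\in H^t$ with arbitrary $t\in\R$ the extension step needs the distributional version of the base case, not just the $H^\sigma$ one; you implicitly acknowledge this when you offer to cite it.)

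Your reduction for $s<0$, however, has a genuine gap. You put $w=(-\Delta)^s u$ and try to feed $w$ into the positive-order case with exponent $-s$. That case, as you have stated it, requires $w\in H^{t'}(\R^n)$ for some $t'$, but this fails in general: on the Fourier side $\hat w(\xi)=\abs{\xi}^{2s}\hat u(\xi)$, and the singularity of $\abs{\xi}^{2s}$ at the origin can throw $\hat w$ out of $L^2_{\mathrm{loc}}$ even for $s>-n/4$, and certainly for $s\in(-n/2,-n/4]$ with $u\in\rapidly(\R^n)$. The clean fix is to run the locality argument the \emph{other} way: write $s=-k+\sigma$ with $k\in\N$, $k\ge 1$, $\sigma\in(0,1)$, and observe that
\[
(-\Delta)^\sigma u=(-\Delta)^k\bigl((-\Delta)^s u\bigr),
\]
so locality of the integer power $(-\Delta)^k$ together with the hypothesis $(-\Delta)^s u|_V=0$ gives $(-\Delta)^\sigma u|_V=0$. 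Combined with $u|_V=0$, the already-established base case $\sigma\in(0,1)$ applied to $u$ itself yields $u=0$. This keeps you inside $H^t(\R^n)$ (respectively $\rapidly(\R^n)$) throughout and explains the thresholds $-n/4$ and $-n/2$ as the ranges in which $(-\Delta)^s u$ is a well-defined tempered distribution to which $(-\Delta)^k$ may be applied.
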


\begin{lemma}[Poincar\'e lemma]
\label{lemma:poincarelemma}
Let $U\in(\cdistr (\R^\dimens))^
\dimens$ such that $\der U=0$.
Then there is $\phi\in\cdistr (\R^\dimens)$ such that $U=\der\phi$.
\end{lemma}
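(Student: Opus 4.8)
The plan is to reduce to the classical distributional Poincar\'e lemma and then correct the potential at infinity, using that the complement of a large ball is connected when $n\geq 2$ (the standing assumption of the paper).

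First, since $U\in(\cdistr(\R^n))^n\subset(\distr(\R^n))^n$ and $\der U=0$, the Poincar\'e lemma for general distributions (stated in section~\ref{sec:distributions}, see~\cite[Theorem 2.1]{MA-poincare-derham-theorems}) provides some $\phi\in\distr(\R^n)$ with $U=\der\phi$, that is $\partial_j\phi=U_j$ for $j=1,\dotso,n$. The only thing left is to upgrade $\phi$ to a compactly supported distribution.

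Next, choose $R>0$ large enough that $\spt U_j\subset\overline{B(0,R)}$ for every $j$; such $R$ exists because each $U_j\in\cdistr(\R^n)$. On the open set $\Omega:=\R^n\setminus\overline{B(0,R)}$ we then have $\der\phi=(U_1,\dotso,U_n)|_\Omega=0$. Since $n\geq 2$, the set $\Omega$ is connected, and a distribution on a connected open set whose distributional gradient vanishes must equal a constant; hence $\phi|_\Omega=c$ for some $c\in\C$. Setting $\psi:=\phi-c\in\distr(\R^n)$, we get that $\psi$ vanishes on $\Omega$, so $\spt\psi\subset\overline{B(0,R)}$ is compact and therefore $\psi\in\cdistr(\R^n)$; moreover $\der\psi=\der\phi=U$. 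Thus $\psi$ is the desired compactly supported potential, and the lemma follows with $\phi$ replaced by $\psi$.

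The argument is short, and the only points needing care are the two standard facts used in the middle step: that $\R^n\setminus\overline{B(0,R)}$ is connected (this is precisely where $n\geq2$ is used) and that a distribution with vanishing distributional gradient on a connected open set is constant (for instance by mollification on balls together with propagation along chains of overlapping balls). Everything else is bookkeeping with supports. As an alternative one could argue entirely on the Fourier side: $\der U=0$ reads $\xi_i\hat U_j=\xi_j\hat U_i$, so $\hat U_j/(i\xi_j)$ glues to a holomorphic function on $\C^n\setminus\{0\}$ which extends to an entire function by Hartogs' extension theorem (again valid since $n\geq2$); the Paley--Wiener--Schwartz bounds for the $\hat U_j$ transfer to it by selecting, for each $\zeta$, an index $j$ with $\abs{\zeta_j}$ comparable to $\abs{\zeta}$, giving an $\mathcal{E}'$-potential directly. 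I would present the first, more elementary proof.
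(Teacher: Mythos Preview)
The paper does not actually prove this lemma; it simply cites the references \cite{HO-topological-vector-spaces, MA-poincare-derham-theorems} and moves on. Your argument is correct and supplies a clean self-contained proof under the assumption $n\geq 2$, which, as you note, is the standing hypothesis in every place the lemma is invoked (theorems~\ref{thm:ucpnormalvector} and~\ref{thm:xrayvectorpolynomial}). The two-step structure---first produce a distributional potential via the ordinary Poincar\'e lemma, then subtract the constant value at infinity to force compact support---is the standard route, and you correctly isolate the one place where $n\geq 2$ enters, namely the connectedness of $\R^n\setminus\overline{B(0,R)}$. It is worth noting explicitly that the lemma is genuinely false for $n=1$ (for instance $U=\delta_0$ satisfies $\der U=0$ vacuously but has no compactly supported primitive), so your restriction is not merely a convenience of the proof.

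The Fourier/Hartogs alternative you sketch is also correct and is closer in spirit to the treatment in \cite{HO-topological-vector-spaces}; it has the pleasant feature of reading off the Paley--Wiener--Schwartz bounds for the potential directly, at the cost of invoking a heavier complex-analytic tool. Either version would be a welcome addition, since the paper currently offers no argument at all.
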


The proof of lemma~\ref{lemma:poincarelemma} can be found for example in~\cite{HO-topological-vector-spaces, MA-poincare-derham-theorems}. The third lemma is a known result about the zero set of multivariate polynomials.

\begin{lemma}[{\cite[Lemma on p.1]{OKA-zerosofpolynomial}}]
\label{lemma:zerosetpolynomial}
Let $Q=Q(x)$ be a non-zero multivariate polynomial of order $m\in\N$
\begin{equation}
Q(x)=\sum_{|\alpha|\leq m}b_\alpha x^\alpha=\sum_{|\alpha|\leq m}b_\alpha x_1^{\alpha_1}\cdots x_n^{\alpha_n}, \quad b_\alpha\in\C,
\end{equation}
where $\alpha=(\alpha_1, \dotso, \alpha_n)\in\N^n$ is a multi-index such that $\abs{\alpha}=\alpha_1+\dotso+\alpha_n$. Then the set $Z_Q=\{x\in\R^n:Q(x)=0\}$ has Lebesgue measure zero.
\end{lemma}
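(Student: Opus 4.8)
The statement to prove is Lemma~\ref{lemma:zerosetpolynomial}: a non-zero multivariate polynomial has zero set of Lebesgue measure zero.

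The plan is to proceed by induction on the number of variables $n$, which is the natural strategy for such a statement and keeps the argument elementary. For the base case $n=1$, a non-zero polynomial $Q(x_1)$ of degree $m$ has at most $m$ distinct real roots (by the fundamental theorem of algebra, or simply by the factor theorem and a degree count), so $Z_Q$ is finite and hence Lebesgue-null.

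For the inductive step, I would assume the claim holds for polynomials in $n-1$ variables and let $Q(x_1,\dots,x_n)$ be a non-zero polynomial in $n$ variables. Write $Q$ as a polynomial in $x_n$ with coefficients that are polynomials in $x'=(x_1,\dots,x_{n-1})$, say $Q(x',x_n)=\sum_{k=0}^{d} c_k(x')\,x_n^k$ where $c_d\not\equiv 0$ (such a $d$ exists because $Q\not\equiv 0$). Split $\R^n$ into two pieces according to whether $c_d(x')=0$ or not. On the first piece, $x'$ ranges over $Z_{c_d}\subset\R^{n-1}$, which is Lebesgue-null in $\R^{n-1}$ by the inductive hypothesis, so this piece is contained in $Z_{c_d}\times\R$, which has $n$-dimensional Lebesgue measure zero. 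On the second piece, for each fixed $x'$ with $c_d(x')\neq 0$, the function $x_n\mapsto Q(x',x_n)$ is a non-zero polynomial of degree exactly $d$ in one variable, hence has at most $d$ roots, so the slice $\{x_n: Q(x',x_n)=0\}$ has one-dimensional measure zero. By Tonelli's theorem (or Fubini applied to the indicator function of $Z_Q$), integrating the slice measures shows that the second piece also has $n$-dimensional measure zero. Adding the two contributions gives $|Z_Q|=0$.

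The only mild subtlety is the measurability needed to invoke Tonelli: $Z_Q$ is closed, hence Borel, so its indicator function is measurable and Tonelli applies without issue. I do not expect any real obstacle here; the argument is routine, and the main point to get right is simply the bookkeeping of the two cases $c_d(x')=0$ and $c_d(x')\neq 0$ together with the choice of $d$ as the true $x_n$-degree so that the leading coefficient does not vanish identically. Alternatively, one could avoid induction entirely and argue analytically: $Q$ is real-analytic and not identically zero, so its zero set is a proper analytic subvariety of $\R^n$ and therefore has measure zero; but the inductive proof above is more self-contained and is the one I would present.
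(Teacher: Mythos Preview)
Your inductive argument is correct and complete: the base case $n=1$, the decomposition $Q(x',x_n)=\sum_{k\le d}c_k(x')x_n^k$ with $c_d\not\equiv 0$, the split into $\{c_d(x')=0\}$ and its complement, and the application of Tonelli to the (closed, hence Borel) set $Z_Q$ all work exactly as you describe. Note also that your proof applies directly to complex coefficients, since a non-zero univariate polynomial over $\C$ still has at most $d$ roots.

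The paper, however, does not supply its own proof of this lemma: it simply cites the reference (which proves the real-coefficient case) and remarks that the complex-coefficient case follows by writing $Q=\operatorname{Re}Q+i\operatorname{Im}Q$ and observing that $Z_Q\subset Z_{\operatorname{Re}Q}\cap Z_{\operatorname{Im}Q}$, with at least one of $\operatorname{Re}Q$, $\operatorname{Im}Q$ a non-zero real polynomial. So there is nothing to compare against in detail; your self-contained induction is the standard argument and is presumably what the cited reference contains. The only (very minor) difference in strategy is that the paper reduces $\C$-coefficients to $\R$-coefficients first, whereas your induction makes that reduction unnecessary.
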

Lemma~\ref{lemma:zerosetpolynomial} is proved in~\cite{OKA-zerosofpolynomial} for real coefficients but the result holds also for complex coefficients by splitting~$b_\alpha\in\C$ to its real and imaginary parts. We note that the set~$Z_Q$ is Zariski closed but not the whole space~$\R^n$. From the coarseness of the Zariski topology (i.e. there are relatively few closed sets) one can already deduce that the set~$Z_Q$ must be small in topological sense (see e.g.~\cite[Chapter 15.2]{DF-abstract-algebra}).

The next lemma shows how the normal operator of the X-ray transform of vector fields is related to the normal operator of scalar fields (see also~\cite[Proof of theorem 1.1]{IM-one-forms-partial-data}).

\begin{lemma}
\label{lemma:scalarvectornormaloperator}
Let $F\in(\cdistr(\R^n))^n$. Then $\der(\no_1 F)=(n-1)^{-1}\no_0(\der F)$ holds componentwise where~$\no_0$ acts on the components $(\der F)_{ij}\in\cdistr(\R^n)$.
\end{lemma}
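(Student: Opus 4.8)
The plan is to compute $\der(\no_1 F)$ directly from the convolution formula~\eqref{eq:normaloperatorvectorfield} and compare it componentwise with $\no_0(\der F)$ via~\eqref{eq:normaloperatorscalar}. Since both $\no_1$ and $\no_0$ are defined on compactly supported distributions by duality and the convolution formulas hold in the sense of distributions, it suffices to verify the identity of the Schwartz kernels after differentiation, or equivalently to work on the Fourier side where convolutions become products and everything reduces to an identity of (homogeneous, tempered) multipliers.

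First I would pass to the Fourier side. Recall that $\widehat{\abs{\cdot}^{1-n}} = c_n \abs{\xi}^{-1}$ for a dimensional constant (this is the statement that $\no_0 = (-\Delta)^{-1/2}$ up to a constant), so that $\widehat{\no_0 g}(\xi) = 2 c_n \abs{\xi}^{-1}\hat g(\xi)$. Similarly the kernel $\frac{2 x_i x_j}{\abs{x}^{n+1}}$ of $\no_1$ has Fourier transform of the form $d_n \bigl(\delta_{ij}\abs{\xi}^{-1} - \xi_i\xi_j\abs{\xi}^{-3}\bigr)$ for some dimensional constant $d_n$; this is the standard computation of the symbol of the normal operator of the X-ray transform of vector fields, and $d_n = 2 c_n$ with the normalizations above. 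Hence
\begin{equation}
\widehat{(\no_1 F)_i}(\xi) = 2 c_n \sum_{j=1}^n \Bigl(\delta_{ij}\abs{\xi}^{-1} - \frac{\xi_i\xi_j}{\abs{\xi}^{3}}\Bigr)\hat F_j(\xi).
\end{equation}
Applying $\der$, i.e. multiplying the $i$-th component by $i\xi_k$ and antisymmetrizing, the term $\delta_{ij}\abs{\xi}^{-1}\hat F_j$ produces $i\xi_k\abs{\xi}^{-1}\hat F_i$, while the term $\xi_i\xi_j\abs{\xi}^{-3}\hat F_j$ contributes a factor symmetric in the pair $(i,k)$ once we also form $\partial_i(\no_1F)_k$, so it cancels in $(\der(\no_1F))_{ki}=\partial_k(\no_1F)_i-\partial_i(\no_1F)_k$. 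What remains is $2 c_n \abs{\xi}^{-1}\bigl(i\xi_k\hat F_i - i\xi_i\hat F_k\bigr) = 2 c_n \abs{\xi}^{-1}\widehat{(\der F)_{ki}}(\xi)$, which is exactly $\widehat{\no_0((\der F)_{ki})}(\xi)$. Comparing the dimensional constants ($2 c_n$ against the constant in $\no_0$, which is also built from the same $c_n$) yields the factor $(n-1)^{-1}$; I would pin this down by tracking the normalization $\no_0 = 2(\,\cdot\ast\abs{\cdot}^{1-n})$ against $(-\Delta)^{-1/2}$ as in section~\ref{sec:xrayscalar}.

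The main obstacle is bookkeeping rather than conceptual: one must handle the convolution kernels $\abs{x}^{1-n}$ and $x_ix_j\abs{x}^{-n-1}$ as tempered distributions (they are locally integrable only for $n\ge 2$, and their Fourier transforms are homogeneous distributions requiring the usual care at the origin), and one must keep the dimensional constants consistent between the two normalizations so that the stated factor $(n-1)^{-1}$ comes out correctly. An alternative that avoids the Fourier computation is to note the elementary pointwise identity $\partial_{x_k}\!\bigl(\tfrac{x_ix_j}{\abs{x}^{n+1}}\bigr) - \partial_{x_i}\!\bigl(\tfrac{x_kx_j}{\abs{x}^{n+1}}\bigr)$ relates to $\partial_{x_j}\abs{x}^{1-n}$ up to the factor $(n-1)$, differentiate under the convolution, and move one derivative from the kernel onto $F_j$ using $\partial_j(\,\cdot\,)\ast F_j$; summing over $j$ and antisymmetrizing then gives $\der(\no_1 F)_{ki} = (n-1)^{-1}\,\bigl(\abs{\cdot}^{1-n}\bigr)\!\ast\!\bigl(\partial_k F_i - \partial_i F_k\bigr)\cdot(\text{const})$, which is $(n-1)^{-1}\no_0((\der F)_{ki})$. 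Either way the proof is short once the constants are fixed.
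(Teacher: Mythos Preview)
Your Fourier-side outline is structurally sound, but the one place where the argument has content---the constant---is stated incorrectly and inconsistently. With the normalizations of the paper one has
\[
\widehat{\frac{2x_ix_j}{|x|^{n+1}}}(\xi)=\frac{2c_n}{n-1}\Bigl(\delta_{ij}|\xi|^{-1}-\xi_i\xi_j|\xi|^{-3}\Bigr),
\]
not $2c_n(\cdots)$. If $d_n=2c_n$ held as you assert, your own computation would give $\der(\no_1F)=\no_0(\der F)$ with no extra factor, contradicting the lemma. The factor $(n-1)^{-1}$ does not emerge from a later ``comparison of dimensional constants''; it is already built into the symbol of $\no_1$, and pinning it down is the whole point. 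So the Fourier route is fine, but you have not actually carried it out, and the one concrete claim you make about it is wrong.

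The paper takes your ``alternative'' kernel-side route and executes it directly. It first verifies the elementary identity
\[
\frac{2x_ix_j}{|x|^{n+1}}=\frac{2}{n-1}\Bigl(\delta_{ij}|x|^{1-n}-\partial_i\bigl(x_j|x|^{1-n}\bigr)\Bigr),
\]
which exhibits the factor $(n-1)^{-1}$ at the outset (this is just $\partial_i|x|^{1-n}=(1-n)x_i|x|^{-n-1}$ together with the product rule). Substituting gives $(\no_1F)_i=\frac{1}{n-1}\no_0 F_i-\frac{2}{n-1}\sum_j x_j|x|^{1-n}\ast\partial_iF_j$. Forming $\partial_k(\cdot)_i-\partial_i(\cdot)_k$, the second sum is symmetric in $(i,k)$ by equality of mixed partials and drops out, while the first term yields $\frac{1}{n-1}\no_0(\partial_kF_i-\partial_iF_k)$. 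This is shorter and avoids any appeal to the Fourier transform of homogeneous distributions.
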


\begin{proof}
The normal operator has the expression
\begin{equation}
(\no_1 F)_i
=
\sum_{j=1}^\dimens\frac{2x_ix_j}{\abs{x}^{\dimens+1}}\ast F_j.
\end{equation}
Rewrite the kernel as
\begin{equation}
\frac{2x_ix_j}{\abs{x}^{n+1}}
=
\frac{2}{\dimens-1}\bigg(\delta_{ij}\abs{x}^{1-\dimens}-\partial_i(x_j\abs{x}^{1-\dimens})\bigg)
\end{equation}
which implies that
\begin{equation}
(\no_1 F)_i
=
\frac{2}{\dimens-1}\bigg(
\frac{1}{2}\no_0 F_i
-
\sum_{j=1}^\dimens x_j\abs{x}^{1-\dimens}\ast\partial_i F_j
\bigg)
.
\end{equation}
Calculating the components of $\der(\no_1 F)$ we obtain
\begin{equation}
\label{eq:relationofnormaloperators}
\partial_k (\no_1F)_i-\partial_i(\no_1 F)_k
=
\frac{1}{\dimens-1}
\no_0(\partial_k F_i-\partial_i F_k)
.
\end{equation}
This means that $
\der(\no_1F)
=
(\dimens-1)^{-1}
\no_0(\der F)
$
where~$\no_0$ acts componentwise on $\der F$, giving the claim
\end{proof}

Now we are ready to prove our results. We start with the main theorem.

\begin{proof}[Proof of theorem \ref{thm:ucppolynomial}]
Let $f\in\adm_V$ and $s\in (-n/4, \infty)\setminus\Z$.
This means that $f\in H^r(\R^n)$ for some $r\in\R$ and $P(D)f|_V=0$ for some constant coefficient partial differential operator~$P(D)$ of order $m\in\N$ and nonempty open set~$V\subset\R^n$.
In particular, we have $f\in\tempered(\R^n)$ such that $\hat{f}=\langle\cdot\rangle^{-r} g$ where $g\in L^2(\R^n)$ and hence $\hat{f}\in L^1_{loc}(\R^n)$ is a locally integrable function.
Using the properties of the Fourier transform we see that $P(D)(\fraclaplace f)=\fraclaplace (P(D)f)$ because $P(D)$ has constant coefficients.
Since $P(D)$ is a local operator we obtain the conditions $P(D)f|_V=\fraclaplace(P(D)f)|_V=0$.
Now $P(D)\colon H^r(\R^n)\rightarrow H^{r-m}(\R^n)$ is continuous (see e.g.~\cite[Theorem 12.7]{MI:distribution-theory2}) and we have $P(D)f\in H^{r-m}(\R^n)$.
We can use lemma~\ref{lemma:uniquecontinuationoffractionallaplacian} for $P(D)f$ to obtain that $P(D)f=0$ as a tempered distribution.
Taking the Fourier transform this is equivalent to that $P(\xi)\hat{f}(\xi)=0$ almost everywhere where $P(\xi)$ is a multivariate polynomial of order~$m\in\N$.
Since $P(\xi)\neq 0$ almost everywhere by lemma~\ref{lemma:zerosetpolynomial}, we have that $\hat{f}=0$ almost everywhere and so $f=0$ as claimed.

Let then $f\in\rapidly(\R^n)\cap\adm_V$ and $s\in (-n/2, \infty)\setminus\Z$. Using the same arguments as above we obtain that $P(D)f|_V=\fraclaplace(P(D)f)|_V=0$ for some constant coefficient partial differential operator~$P(D)$ and nonempty open set $V\subset\R^n$.
We know that $f\in\rapidly(\R^n)$ is equivalent to that $\hat{f}\in\slowly(\R^n)$.
It follows from the Leibnitz product rule for multivariable functions that $\fourier (P(D)f)(\xi)=P(\xi)\hat{f}(\xi)\in\slowly(\R^n)$ since $P(\xi)$ is polynomial and the derivatives of~$\hat{f}$ are polynomially growing.
This is equivalent to that $P(D)f\in\rapidly(\R^n)$ and we can use lemma~\ref{lemma:uniquecontinuationoffractionallaplacian} to deduce that $P(D)f=0$ as a tempered distribution.
The rest of the proof is completed as above using the Fourier transform and the fact that $P(\xi)\neq 0$ almost everywhere.
\end{proof}

\begin{remark}
In the proof of theorem~\ref{thm:ucppolynomial} we used the fact that $f\in H^r(\R^n)$ implies that $\hat{f}\in L^1_{loc}(\R^n)$ is a locally integrable function.
For example, if $P(D)=-\Delta$, then $P(D)f=0$ implies $\abs{\xi}^2\hat{f}(\xi)=0$ and hence $\spt(\hat{f})\subset\{0\}$.
This means that~$\hat{f}$ is a linear combination of derivatives of the delta distribution, but there is no such non-zero combination in~$L^1_{loc}(\R^n)$.
In other words, a non-zero function in~$H^{-\infty}(\R^n)$ cannot be a polynomial on the whole space~$\R^n$, which for a tempered distribution is equivalent with the Fourier transform being supported at the origin.
The restriction of $f\in H^{-\infty}(\R^n)$ to an open set $V\subset\R^n$ can be a polynomial; compare this to the examples given in section~\ref{sec:admissible}.
\end{remark}


The rest of the results are then direct consequences of theorem~\ref{thm:ucppolynomial}.

\begin{proof}[Proof of theorem~\ref{thm:strongerucp}]
By the assumptions we have that $f\in\cdistr(\R^n)$ satisfies $P(D)f|_V=0$ for some constant coefficient partial differential operator $P(D)$ and $\partial^\beta(\fraclaplace f)(x_0)=0$ for some $x_0\in V$ and all $\beta\in\N^n$. Since all the derivatives of~$\fraclaplace f$ vanish at~$x_0$ and the partial derivatives and fractional Laplacian commute, we obtain that
\begin{equation}
0=(P(D)\partial^\beta)(\fraclaplace f)(x_0)=\partial^\beta(\fraclaplace (P(D)f))(x_0).
\end{equation}
Now $P(D)f\in\cdistr(\R^n)$ and we can use corollary 4 on page 652 in~\cite{CMR-ucp-higher-order-laplacians} to obtain that $P(D)f=0$. The claim then follows as in the proof of theorem~\ref{thm:ucppolynomial}.
\end{proof}

\begin{proof}[Proof of theorem \ref{thm:ucpnormaloperator}]
If $f\in\cdistr(\R^n)\cap\adm_V$ or $f\in C_\infty(\R^n)\cap\adm_V$, then also $f\in\rapidly(\R^n)\cap\adm_V$. Since $\no_0=(-\Delta)^{-1/2}$ up to a constant factor and $n\geq 2$ we have that $-1/2\in (-n/2, \infty)\setminus\Z$ and we can use theorem~\ref{thm:ucppolynomial} to obtain that $f=0$.
\end{proof}

\begin{proof}[Proof of theorem \ref{thm:xrayscalarpolynomial}]
The assumption $\xrt_0 f=0$ on all lines intersecting~$V$ implies that $\no_0 f|_V=0$. Since we also assume that $f\in\cdistr(\R^n)\cap\adm_V$ or $f\in C_\infty(\R^n)\cap\adm_V$ we obtain $f=0$ by theorem~\ref{thm:ucpnormaloperator}.
\end{proof}

\begin{proof}[Proof of theorem~\ref{thm:ucpnormalvector}]
By lemma~\ref{lemma:scalarvectornormaloperator} we have $\der(\no_1 F)=\no_0(\der F)$ componentwise up to a constant factor. Therefore $\partial^\beta(\no_0(\der F)_{ij})(x_0)=0$ for some $x_0\in V$, all $\beta\in\N^n$ and all $i, j=1, \dotso , n$. Now by locality of the exterior derivative $(\der F)_{ij}\in\cdistr(\R^n)\cap\adm_V$. Since $\no_0=(-\Delta)^{-1/2}$ up to a constant factor we can use theorem~\ref{thm:strongerucp} for the components $(\der F)_{ij}$ to obtain that $\der F=0$. Finally lemma~\ref{lemma:poincarelemma} implies that $F=\der\phi$ for some $\phi\in\cdistr(\R^n)$.
\end{proof}

\begin{proof}[Proof of theorem \ref{thm:xrayvectorpolynomial}]
The assumption $\xrt_1 F=0$ on all lines intersecting~$V$ implies that $\no_1 F|_V=0$. Especially $\der(\no_1 F)$ vanishes to infinite order at some point in~$V$ and we can use theorem~\ref{thm:ucpnormalvector} to deduce that $F=\der\phi$ for some $\phi\in\cdistr(\R^n)$. 
\end{proof}


\subsection*{Acknowledgements}
J.I. was supported by Academy of Finland (grants 332890 and 336254).
K.M. was supported by Academy of Finland (Centre of Excellence in Inverse Modelling and Imaging, grant numbers 284715 and 309963).
We want to thank the anonymous referees for their valuable feedback.

\bibliography{refs} 
\bibliographystyle{abbrv}

\end{document}